\documentclass[leqno,11pt]{amsart}

\usepackage{amssymb}
\usepackage{amsmath}
\usepackage{enumerate}
\usepackage{color}
\usepackage[pdftex]{graphicx}

\def\s{\mathbb{S}}

\def\R{\mathbb{R}}
\def\r{\mathbb{R}}

\def\z{\mathbb{Z}}

\def\C{\mathbb{C}}
\def\c{\mathbb{C}}

\newcommand{\im}{\mathop{\rm Im }\nolimits}

\newtheorem{remark}{Remark}[section]
\newtheorem{theorem}{Theorem}[section]
\newtheorem{proposition}{Proposition}[section]
\newtheorem{corollary}{Corollary}[section]

\numberwithin{equation}{section}

\hyphenation{ mi-ni-mal e-xam-ples de-fi-ni-tion cons-truc-tions
cri-ti-cal bet-ween cons-truc-ted pro-blem ma-ni-folds o-rien-ted
ge-ne-ra-ted iso-me-try ta-king sy-mme-tric geo-me-try
cha-rac-te-ris-tic o-pe-ra-tor Her-mi-tian e-xis-ten-ce con-gruent
in-fi-ni-te-si-mal pa-ra-llel sta-tio-na-ry par-ti-cu-lar
co-rres-pon-ding sub-ma-ni-fold in-te-res-ted ca-rac-te-ri-zed
va-ni-shes cons-tant}

\oddsidemargin=1.3cm \evensidemargin=1.3cm \headsep=0.5cm
\textwidth=14cm% \textheight=21.5cm

\begin{document}

\title[A new construction of Lagrangians in terms of planar curves]{A new construction of Lagrangians \\ in the complex Euclidean plane \\ in terms of planar curves}

\author{Ildefonso Castro}
\address{Departamento de Matem\'{a}ticas \\
Universidad de Ja\'{e}n \\
23071 Ja\'{e}n, SPAIN} \email{icastro@ujaen.es}

\author{Ana M.~Lerma}
\address{Departamento de Matem\'{a}ticas \\
Universidad de Ja\'{e}n \\
23071 Ja\'{e}n, SPAIN} \email{alerma@ujaen.es}

\thanks{Research partially supported by a MEC-Feder grant
MTM2011-22547 and a Junta Andalucia Grant P09-FQM 5088}

\subjclass[2000]{Primary 53C42, 53B25; Secondary 53D12}

\keywords{Lagrangian surfaces.}

\date{}

\begin{abstract}
We introduce a new method to construct a large family of Lagrangian surfaces in complex Euclidean plane $\C^2$ by means
of two planar curves making use of their usual product as complex functions and integrating the Hermitian product of
their position and tangent vectors.

Among this family, we characterize minimal, constant mean curvature, Hamiltonian stationary, solitons for mean
curvature flow and Willmore surfaces in terms of simple properties of the curvatures of the generating curves. As an
application, we provide explicitly conformal parametrizations of known and new examples of these classes of Lagrangians
in $\C^2$.
\end{abstract}

\maketitle

%*****************************************************************************************************************

\section{Introduction}

An isometric immersion $\phi:M^n\rightarrow \widetilde{M}^n$ of an $n$-dimensional Riemannian manifold $M^n$ into an
$n$-dimensional Kaehler manifold $\widetilde{M}^n$ is said to be Lagrangian if the complex structure $J$ of
$\widetilde{M}^n$ interchanges each tangent space of $M^n$ with its corresponding normal space. Lagrangian submanifolds
appear naturally in several contexts of Mathematical Physics. For example, special Lagrangian submanifolds of the
complex Euclidean space $\C^n$ (or of a Calabi-Yau manifold) have been studied widely and in \cite{SYZ} it was proposed
an explanation of mirror symmetry in terms of the moduli spaces of special Lagrangian submanifolds. These submanifolds
are volume minimizing and, in particular, they are minimal submanifolds. In the two-dimensional case, special
Lagrangian surfaces of $\C^2$ are exactly complex surfaces with respect to another complex structure on $\R^4\equiv
\C^2$.

The simplest examples of Lagrangian surfaces in $\C^2$ are given by the product of two planar curves
$\alpha=\alpha(t)$, $t\in I_1\subseteq\R$, and $\omega=\omega(s)$, $s\in I_2\subseteq\R$:
\begin{align}\label{times-curves}
(t,s)\stackrel{\phi}{\longmapsto}\left(\alpha(t),\omega(s)\right).
\end{align}

Another fruitful method of construction of Lagrangian surfaces in $\C^2$ is obtained when one takes the particular
version for the two-dimensional case of Proposition 3 in \cite{RU} (see also \cite{CM} and \cite{AC}), involving a
planar curve $\alpha=\alpha(t)$, $t\in I_1\subseteq\R$, and a Legendre curve $(\gamma_1,\gamma_2)=\gamma=\gamma(s)$,
$s\in I_2\subseteq\R$, in the 3-sphere $\mathbb{S}^3\subset\C^2$
\begin{align}\label{cdot-curves}
(t,s)\stackrel{\phi}{\longmapsto}\alpha(t)\cdot\gamma(s)=\alpha(t)\left(\gamma_1(s),\gamma_2(s)\right).
\end{align}

In \cite{CCh}, it was presented a different method to construct a large family of Lagrangian surfaces in $\C^2$ using a Legendre
curve $(\alpha_1,\alpha_2)=\alpha(t)$, $t\in I_1\subseteq\R$, in the anti De Sitter 3-space $\mathbb{H}^3_1\subset\C^2$ and a
Legendre curve $(\gamma_1,\gamma_2)=\gamma(s)$, $s\in I_2\subseteq\R$, in $\mathbb{S}^3\subset\C^2$:
\begin{align}\label{circ-curves}
(t,s)\stackrel{\phi}{\longmapsto}\left(\alpha_1(t)\gamma_1(s),\alpha_2(t)\gamma_2(s)\right).
\end{align}

We observe that in the constructions \eqref{times-curves}, \eqref{cdot-curves} and \eqref{circ-curves} the components of the position
vector $\phi=(\phi_1,\phi_2)$ of the immersions are given by the product of two complex functions:
\begin{align}
\phi_1(t,s)=\begin{cases}\alpha(t)\\\alpha(t)\gamma_1(s)\\\alpha_1(t)\gamma_1(s)\end{cases},\qquad\phi_2(t,s)
\begin{cases}\omega(s)\\\alpha(t)\gamma_2(s)\\\alpha_2(t)\gamma_2(s)\end{cases}.
\end{align}

From an algebraic point of view, we propose now to consider one of the components as a product of two complex functions
and the other as an addition of another two complex functions. So, we can consider the following type of possible
Lagrangian immersions:
\begin{align}\label{ansatz}
\phi(t,s)=\left(f(t)+g(s),\alpha(t)\omega(s)\right),
\end{align}
where $\alpha=\alpha(t)$, $t\in I_1\subseteq\R$ and $\omega=\omega(s)$, $s\in I_2\subseteq\R$ are planar curves. If we
impose that $\phi$ gives an orthonormal parametrization of a Lagrangian immersion, we have that
$\left(\phi_t,\phi_s\right)=0$, where $(\cdot,\cdot)$ denotes the usual bilinear Hermitian product of $\C^2$. Since
$\phi_t=(f'(t),\alpha'(t)\omega(s))$ and $\phi_s=(\dot g(s),\alpha(t)\dot\omega(s))$ where $'$  (resp.\ $\dot\ $) means
derivate respect to $t$ (resp.\ to $s$), we get
\begin{align}\label{condition*}
f'(t)\overline{\dot g(s)}+\alpha'(t)\overline{\alpha(t)}\omega(s)\overline{\dot\omega(s)}=0.
\end{align}
So, essentially we can take
\begin{align}
f(t)=-\int\alpha'(t)\overline{\alpha(t)}dt,\qquad g(s)=\int\dot\omega(s)\overline{\omega(s)}ds.
\end{align}
Putting this in \eqref{ansatz} we can check that
\begin{align}\label{alfaomega}
\phi(t,s)=\left(\int\dot\omega(s)\overline{\omega(s)}ds-\int\alpha'(t)\overline{\alpha(t)}dt,\alpha(t)\omega(s)\right)
\end{align}
is a Lagrangian immersion constructed from two planar curves (see Theorem \ref{thm_immersion}), well defined up to a
translation in $\C^2$.

An interesting problem in this setting is to find nontrivial examples of Lagrangian surfaces with some given geometric
properties. In this paper we pay our attention to an extrinsic point of view and focus on several classical equations
involving the mean curvature vector and natural associated variational problems. In this way, we determine in our
construction of Lagrangians not only those which are minimal, have parallel mean curvature vector or constant mean
curvature, but also those ones that are Hamiltonian stationary, solitons of mean curvature flow or Willmore.

When we involve lines and circles in (\ref{alfaomega}) we get the most regular surfaces: special Lagrangians (Corollary
\ref{cor_SpecialLagrangians}) and Hamiltonian stationary Lagrangians (Corollary \ref{cor_HSL}). In this setting, we
provide explicit conformal parametrizations of some known examples in terms of elementary functions and obtain some new
examples of interesting Hamiltonian stationary Lagrangians. With some more sophisticated curves, we obtain a very large
family of new Lagrangians with constant mean curvature vector (Corollary \ref{cor_Hconstant}), which includes a
(branched) Lagrangian torus. Our construction (\ref{alfaomega}) is actually inspired in the Lagrangian translating
solitons obtained in \cite{CL12} associated to certain special solutions of the curve shortening flow that we recover
in Corollary \ref{cor_translating}. We also provide Willmore Lagrangians when we consider free elastic curves
(Corollary \ref{cor_willmore}). Finally, we illustrate in section 3.8 that we can also arrive at Lagrangian tori
starting from certain closed curves.

The key point of the proof of all our results is the simple expression (\ref{MeanCurvatureVector}) for the mean
curvature vector of the Lagrangian immersion in terms of the curvature functions of the generatrix curves.
\vspace{0.5cm}

%*****************************************************************************************************************

\section{The construction}
In the complex Euclidean plane $\c^2$ we consider the bilinear
Hermitian product defined by
\[
(z,w)=z_1\bar{w}_1+z_2\bar{w}_2, \quad  z,w\in\c^2.
\]
Then $\langle\, \, , \, \rangle = {\rm Re} (\,\, , \,)$ is the
Euclidean metric on $\c^2$ and $\omega = -{\rm Im} (\,,)$ is the
Kaehler two-form given by $\omega (\,\cdot\, ,\,\cdot\,)=\langle
J\cdot,\cdot\rangle$, where $J$ is the complex structure on
$\c^2$.
%We also have that $2 \omega = d\lambda$, where $\lambda$
%is known as the Liouville 1-form of $\c^2$.

Let $\phi:M \rightarrow \c^2$ be an isometric immersion of a
surface $M$ into $\c^2$. $\phi $ is said to be Lagrangian if
$\phi^* \omega = 0$.  Then we have $\phi^* T\c^2 =\phi_* TM \oplus
J \phi_* T M$, where $TM$ is the tangent bundle of $M$. The second
fundamental form $\sigma $ of $\phi $ is given by $\sigma
(v,w)=JA_{Jv}w$, where $A$ is the shape operator, and so the
trilinear form $$C(\cdot,\cdot,\cdot)=\langle \sigma(\cdot,\cdot),
J \cdot \rangle $$ is fully symmetric.

Suppose $M$ is orientable and let $\omega_M$ be the area form of
$M$. If $\Omega = dz_1 \wedge dz_2$ is the closed complex-valued
2-form of $\c^2$, then $\phi^* \Omega = e^{i\beta} \omega_M$,
where $\beta:M\rightarrow \r /2\pi \z$ is called the {\em
Lagrangian angle} map of $\phi$ (see \cite{HL}).  In general,
$\beta $ is a multivalued function; nevertheless $d\beta $ is a
well defined closed 1-form on $M$ and its cohomology class is
called the Maslov class.
%and if $\cos\beta \geq \epsilon$ for some $\epsilon >0$ then the
%Lagrangian is said to be {\em almost calibrated}.

It is remarkable that $\beta$ satisfies (see for example
\cite{SW})
\begin{equation}\label{beta}
H=J\nabla\beta,
\end{equation}
where $H$ is the mean curvature vector of $\phi$, defined by
$H={\rm trace} \, \sigma$.

In this section, we describe a new method to contruct Lagrangian
surfaces in complex Euclidean plane $\c^2$ with nice geometric
properties, in the sense of they are similar to those of a product
of curves.
\begin{theorem}\label{thm_immersion}
Let $\alpha=\alpha(t)\subset\c\setminus\{0\}$, $t\in I_1$, and
$\omega=\omega(s)\subset\c\setminus\{0\}$, $s\in I_2$, be regular
planar curves, where $I_1$ and $I_2$ are intervals of $\r$. For
any $t_0\in I_1$ and $s_0\in I_2$, let define
\begin{equation*}
\Phi=\alpha \ast \omega :I_1\times
I_2\subset\r^2\rightarrow\c^2=\c \times \c
\end{equation*}
\begin{equation*}
\Phi(t,s)=\left(\int_{s_0}^s\dot\omega(y)\overline{\omega(y)}dy-\int_{t_0}^t\alpha'(x)\overline{\alpha(x)}dx
\,  ,\, \alpha(t)\omega(s)\right).
\end{equation*}

Then $\Phi$ is a Lagrangian immersion whose induced metric is
\begin{align}\label{metricPhi}
\Phi^*\langle\ ,\
\rangle=\left(|\alpha|^2+|\omega|^2\right)\left(|\alpha'|^2dt^2+|\dot\omega|^2ds^2\right),
\end{align}
where $'$ and $^\cdot$ denote the derivatives respect to $t$ and
$s$ respectively.

The intrinsic tensor $ C(\cdot,\cdot,\cdot)=\langle
\sigma(\cdot,\cdot),J \cdot \rangle $ of $\Phi=\alpha \ast \omega$
is given by
\begin{align}\label{TensorC}
C(\partial_t,\partial_t,\partial_t)&=|\alpha '|^2 \left(
(|\alpha|^2+|\omega|^2)|\alpha '|
\kappa_\alpha - \langle  \alpha ' , J\alpha \rangle \right) \\
C(\partial_t,\partial_t,\partial_s)&= |\alpha '|^2 \langle \dot \omega , J \omega \rangle \nonumber \\
C(\partial_t,\partial_s,\partial_s)&= |\dot \omega|^2 \langle \alpha ' , J \alpha \rangle \nonumber \\
C(\partial_s,\partial_s,\partial_s)&= |\dot \omega |^2 \left(
(|\alpha|^2+|\omega|^2)|\dot \omega| \kappa_\omega - \langle  \dot
\omega , J\omega  \rangle \right) \nonumber
\end{align}
where $\kappa_\alpha $ and $\kappa_\omega$ are the curvature
functions of $\alpha $ and $\omega$, and $J$ also denotes the
$+\pi/2$-rotation acting on $\c\equiv \r^2$.

The Lagrangian angle map of $\Phi=\alpha \ast \omega$ is given by
\begin{align}\label{LagrangianAngle}
\beta =\arg\left(\alpha'\right)+\arg\left(\dot\omega\right)+\pi
\end{align}
and the mean curvature vector $H$ of $\Phi=\alpha \ast \omega$ by
\begin{align}\label{MeanCurvatureVector}
H
=\frac{1}{|\alpha|^2+|\omega|^2}\left(\frac{\kappa_\alpha}{|\alpha
' |} J\Phi_t+\frac{\kappa_\omega}{|\dot \omega|} J\Phi_s\right).
\end{align}
\end{theorem}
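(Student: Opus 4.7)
The plan is to reduce everything to direct differentiation of $\Phi$, combined with two elementary identities: the curvature formula $\partial_t(\arg\alpha') = \kappa_\alpha|\alpha'|$ for planar curves, and $\operatorname{Im}(z\bar w) = \langle z, Jw\rangle$ on $\c\equiv\r^2$. First I compute
\[
\Phi_t = (-\alpha'\bar\alpha,\; \alpha'\omega), \qquad \Phi_s = (\dot\omega\bar\omega,\; \alpha\dot\omega),
\]
and observe that the Hermitian product $(\Phi_t,\Phi_s)$ vanishes identically: the imaginary part gives the Lagrangian condition (pullback of the K\"ahler form is zero), while the real part shows that the coordinates $(t,s)$ are orthogonal. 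Evaluating $(\Phi_t,\Phi_t)$ and $(\Phi_s,\Phi_s)$ then yields \eqref{metricPhi}.

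For the Lagrangian angle, I compute $\Phi^*\Omega$ as a $2\times 2$ determinant of the components above,
\[
\Phi^*\Omega = -(|\alpha|^2+|\omega|^2)\,\alpha'\dot\omega\; dt\wedge ds,
\]
and divide by the area form $|\alpha'||\dot\omega|(|\alpha|^2+|\omega|^2)\, dt\wedge ds$ read off from \eqref{metricPhi}. This gives $e^{i\beta} = -(\alpha'/|\alpha'|)(\dot\omega/|\dot\omega|)$, hence \eqref{LagrangianAngle}. The mean curvature formula \eqref{MeanCurvatureVector} then follows at once from $H=J\nabla\beta$ in \eqref{beta}: in the orthogonal coordinates of step one,
\[
H = J\!\left(\frac{\beta_t}{|\Phi_t|^2}\partial_t + \frac{\beta_s}{|\Phi_s|^2}\partial_s\right),
\]
and substituting $\beta_t = \kappa_\alpha|\alpha'|$, $\beta_s = \kappa_\omega|\dot\omega|$ together with \eqref{metricPhi} and the identification $J\partial_i \leftrightarrow J\Phi_i$ gives \eqref{MeanCurvatureVector}.

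The cubic form $C$ I would treat separately via the identity $C(\partial_i,\partial_j,\partial_k) = \langle\Phi_{ij},J\Phi_k\rangle = \operatorname{Im}(\Phi_{ij},\Phi_k)$, valid for any Lagrangian immersion into flat $\c^2$ (the tangential part of $\Phi_{ij}$ is killed because $J\Phi_k$ lies in $JTM=TM^\perp$). A further differentiation produces $\Phi_{tt} = (-\alpha''\bar\alpha - |\alpha'|^2,\,\alpha''\omega)$, $\Phi_{ts} = (0,\,\alpha'\dot\omega)$, and the symmetric expression for $\Phi_{ss}$. Substituting into $\operatorname{Im}(\Phi_{ij},\Phi_k)$ and simplifying with $\operatorname{Im}(\alpha''\overline{\alpha'}) = \kappa_\alpha|\alpha'|^3$ yields the four components of \eqref{TensorC}; full symmetry of $C$ under the Lagrangian assumption supplies an automatic cross-check on the three mixed ones, and the trace relation $\langle H, J\partial_t\rangle = g^{ij}C(\partial_i,\partial_j,\partial_t)$ independently reconciles the result with step three.

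The computation is essentially bookkeeping, and the only slightly delicate step is $C(\partial_t,\partial_t,\partial_t)$: the real correction $-|\alpha'|^2$ produced by $\partial_t\bar\alpha$ inside $\Phi_{tt}$ must reorganize cleanly with the $|\omega|^2$ contribution from the second component, so that the coefficient of $\kappa_\alpha|\alpha'|$ assembles into $(|\alpha|^2+|\omega|^2)$ and the residual piece becomes exactly $-\langle\alpha',J\alpha\rangle$. Beyond this, the main risk is sign vigilance around the conventions $(z,w) = z_1\bar w_1+z_2\bar w_2$, $\langle v,Jw\rangle = \operatorname{Im}(v,w)$, and the minus sign relating the K\"ahler form to the imaginary part of the Hermitian product.
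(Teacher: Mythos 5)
Your proposal is correct and follows essentially the same route as the paper: the same tangent vectors $\Phi_t=\alpha'(-\bar\alpha,\omega)$, $\Phi_s=\dot\omega(\bar\omega,\alpha)$, the metric and Lagrangian condition from $(\Phi_t,\Phi_s)=0$, the cubic form from $\operatorname{Im}(\Phi_{ij},\Phi_k)$, the angle from the complex determinant $e^{i\beta}=-\alpha'\dot\omega/(|\alpha'||\dot\omega|)$, and $H=J\nabla\beta$ with $(\arg\alpha')'=|\alpha'|\kappa_\alpha$. All the individual computations you flag (e.g.\ $\Phi_{tt}=(-\alpha''\bar\alpha-|\alpha'|^2,\alpha''\omega)$ and the reassembly of $(|\alpha|^2+|\omega|^2)$ in $C(\partial_t,\partial_t,\partial_t)$) check out against the stated formulas.
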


\begin{proof}
We first compute the tangent vector fields
\begin{align*}
\Phi_t=\alpha'\left(-\overline{\alpha},\omega\right),\qquad \Phi_s=\dot\omega\left(\overline{\omega},\alpha\right).
\end{align*}
Then we obtain
$|\Phi_t|^2=|\alpha'|^2\left(|\alpha|^2+|\omega|^2\right)$,
$|\Phi_s|^2=|\dot\omega|^2\left(|\alpha|^2+|\omega|^2\right)$ and
$\left(\Phi_t,\Phi_s\right)=0$. So $\Phi$ is a Lagrangian
immersion whose induced metric is written as in (\ref{metricPhi}).

Taking imaginary parts in $(\Phi_{tt},\Phi_t)$,
$(\Phi_{tt},\Phi_s)$, $(\Phi_{ss},\Phi_t)$ and
$(\Phi_{ss},\Phi_s)$, we obtain the formulas given for the tensor
$C$ in (\ref{TensorC}).

Using the definition of the Lagrangian angle map $\beta $, we obtain that
\[
e^{i\beta}=\det_\c\Bigl(\frac{\Phi_t}{|\Phi_t|},\frac{\Phi_s}{|\Phi_s|}\Bigr)=-\frac{\alpha'\dot\omega}{|\alpha'||\dot\omega|}
\]
and so we arrive at \eqref{LagrangianAngle}.

Finally, we can get the expresion (\ref{MeanCurvatureVector}) for $H$ using (\ref{beta}) taking into account that
$\left(\arg\alpha'\right)'=|\alpha'|\kappa_\alpha$ and $( \arg \dot \omega )\dot\ =|\dot\omega|\kappa_\omega$, or
directly from (\ref{TensorC}) using the orthonormal frame $\Bigl\{
e_1=\frac{\partial_t}{|\alpha'|\sqrt{|\alpha|^2+|\omega|^2}},\, e_2=
\frac{\partial_s}{|\dot\omega|\sqrt{|\alpha|^2+|\omega|^2}} \Bigr\}$.
\end{proof}

\begin{remark}
{\rm Up to a translation, we can rewrite the Lagrangian immersion
$\Phi=\alpha \ast \omega $ as
\begin{equation*}
\Phi(t,s)=\left( \frac{|\omega(s)|^2}{2}+ \int_{s_0}^s \langle
\dot\omega,J\omega\rangle (y)
dy-\frac{|\alpha(t)|^2}{2}-\int_{t_0}^t\ \langle \alpha', J
\alpha\rangle (x)dx \, , \, \alpha(t)\omega(s)\right).
\end{equation*}
From \eqref{metricPhi}, we also observe that $\left(t^*,s^*\right)\in I_1 \times I_2$
is a singular point of $\Phi=\alpha \ast \omega$ if and only
if $\alpha(t^*)=0=\omega(s^*)$.}
\end{remark}

\begin{remark}
{\rm Interchanging the roles of $\alpha $ and
$\omega$ is produced congruent Lagrangians in $\c^2$. In addition,
the same happens with rotations of $\alpha$ and/or $\omega $. But
only if we consider the same homotethy for $\alpha $ and $\omega$
we get homothetic Lagrangian immersions, concretely, $\rho \alpha
\ast \rho \omega = \rho^2 \alpha \ast \omega, \, \forall \rho >0$.
}
\end{remark}
For example, the totally geodesic Lagrangian plane is recovered in
our construction simply considering straight lines passing through
the origin, $\alpha(t)=t$, $t\in\R$, $\omega(s)=s$, $s\in\R$,
since in this case
\begin{align}\label{totallyGeodesic}
\left(\alpha\ast\omega\right)(t,s)=\left(\frac{s^2-t^2}{2},t\,s\right).
\end{align}

\vspace{0.5cm}

%*****************************************************************************************************************

\section{Applications}

This section is devoted to study several families of
Lagrangian surfaces in our construction described in Theorem
\ref{thm_immersion}; those characterized by different geometric
properties related with the behavior of the mean curvature vector.

\subsection{Special Lagrangians}
A Lagrangian oriented surface is called \emph{special} if its Lagrangian angle is constant ($\beta\equiv\beta_0$).
From \eqref{beta} this means that $H=0$, that is, the Lagrangian immersion is minimal,
but in fact is area-minimizing because they are calibrated by ${\rm Re} (e^{-i\beta_0}\Omega)$ (see \cite{HL}).
It is well known that these surfaces should be holomorphic curves with respect to another complex structure on $\C^2$.

\begin{corollary}\label{cor_SpecialLagrangians}
The immersion $\Phi=\alpha \ast \omega $, given in Theorem \ref{thm_immersion}, is special Lagrangian if and only if
$\alpha $ and $\omega$ are straight lines.

Putting $\alpha (t)=t+i\,a$, $a\in \R$, and $\omega (s)=s+i\,b$, $b\in
\R$, then $\Phi=\alpha\ast\omega$ can be written, up to a translation in $\C^2$, by
\begin{align}\label{minimalPhi}
\Phi(t,s)=\left( \frac{s^2-t^2}{2}+i(at-bs),  ts+i(as+bt) \right),
\ (t,s)\in\R^2.
\end{align}
If $(a,b)=(0,0)$, we get the totally geodesic Lagrangian plane
\eqref{totallyGeodesic}. If $(a,b)\neq (0,0)$, these
special Lagrangians correspond to the holomorphic curves $z
\mapsto c\,z^2 $, $c=-\frac{(a-ib)^2}{2(a^2+b^2)^2}\in \c^*$.
\end{corollary}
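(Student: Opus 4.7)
The plan is to first extract the characterization from the explicit formula for the Lagrangian angle, then normalize the two lines using the translational/rotational freedom of the construction, compute $\Phi$ explicitly, and finally exhibit the image as the graph of a quadratic holomorphic function for the non-standard complex structure on $\C^2$ determined by $\beta_0=\pi$.

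For the characterization, I would apply \eqref{LagrangianAngle}: since $\beta(t,s)=\arg\alpha'(t)+\arg\dot\omega(s)+\pi$ is a sum of a function of $t$ alone and a function of $s$ alone, requiring $\beta$ to be constant forces each summand to be constant. Differentiating gives $|\alpha'|\kappa_\alpha\equiv 0\equiv|\dot\omega|\kappa_\omega$, and regularity of the curves yields $\kappa_\alpha\equiv 0\equiv\kappa_\omega$, so $\alpha$ and $\omega$ are straight lines. (The same conclusion follows, slightly less directly, from \eqref{MeanCurvatureVector} by requiring $H=0$ and using the linear independence of $\{\Phi_t,\Phi_s\}$.) The converse is immediate.

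For the normalization, I would invoke the two Remarks after Theorem \ref{thm_immersion}: rotating $\alpha$ or $\omega$ independently produces congruent immersions, and constant reparametrizations $t\mapsto t+c_1$, $s\mapsto s+c_2$ are harmless. Hence every straight line can be brought to the horizontal form $\alpha(t)=t+ia$, $\omega(s)=s+ib$ with $a,b\in\R$. Inserting $\alpha'=\dot\omega=1$, $\alpha'\overline{\alpha}=t-ia$, $\dot\omega\overline{\omega}=s-ib$ into the definition of $\Phi$ and integrating from $0$ yields
\begin{equation*}
\Phi(t,s)=\left(\tfrac{s^2-t^2}{2}+i(at-bs),\;(t+ia)(s+ib)\right),
\end{equation*}
and the real constant $-ab$ appearing in the second component is absorbed by the $\C^2$-translation allowed in the statement, producing \eqref{minimalPhi}. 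The case $(a,b)=(0,0)$ visibly collapses to \eqref{totallyGeodesic}.

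Assuming $(a,b)\neq(0,0)$, the remaining task is to identify \eqref{minimalPhi} with the stated holomorphic curve. From \eqref{LagrangianAngle} we read $\beta_0=\pi$, so the surface is calibrated by $\mathrm{Re}(e^{-i\pi}\,dz_1\wedge dz_2)=dy_1\wedge dy_2-dx_1\wedge dx_2$, which is the K\"ahler form of the orthogonal complex structure $J_0$ on $\C^2$ whose holomorphic coordinates are $\zeta_1=x_1-ix_2$ and $\zeta_2=y_1+iy_2$. Reading off the real and imaginary parts of \eqref{minimalPhi} gives
\begin{equation*}
\zeta_1=\frac{s^2-t^2}{2}-its=-\frac{(t+is)^2}{2},\qquad \zeta_2=(at-bs)+i(as+bt)=(a+ib)(t+is),
\end{equation*}
so setting $z=\zeta_2$ one obtains $\zeta_1=-z^{2}/\bigl(2(a+ib)^{2}\bigr)=cz^{2}$ with $c=-(a-ib)^{2}/\bigl(2(a^{2}+b^{2})^{2}\bigr)$. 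The only nontrivial ingredient is pinning down the correct $J_0$ and its holomorphic coordinates; everything else reduces to routine integration and algebra.
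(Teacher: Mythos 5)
Your proof is correct and follows essentially the same route as the paper's: vanishing of both curvatures (you via the Lagrangian angle \eqref{LagrangianAngle}, the paper via $H=0$ and \eqref{MeanCurvatureVector}), normalization to horizontal lines by rotations, direct integration to get \eqref{minimalPhi}, and identification of the image as a quadratic holomorphic graph in the coordinates $(y_1+iy_2,\,x_1-ix_2)$ --- which is exactly the paper's map $\varphi$. Your derivation of that complex structure from the calibration $\mathrm{Re}(e^{-i\pi}\Omega)$ is a pleasant motivation the paper omits, but the substitution $\zeta_1=-w^2/2$, $\zeta_2=(a+ib)w$, $z=\zeta_2$ is computationally identical.
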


\begin{remark}
{\rm According to \cite{HO}, the graphs $z \mapsto c\,z^2, \,
c>0$, are the only complete orientable minimal surfaces in
Euclidean $n$-space with finite total curvature $-2\pi$.
Topologically they are cylinders lying in $\R^4$ and
\eqref{minimalPhi} provides a conformal Lagrangian parametrization
of them. }
\end{remark}

\begin{proof}
From \eqref{MeanCurvatureVector} we get that the minimality of
$\Phi=\alpha\ast\omega$ is equivalent to
$\kappa_\alpha=0=\kappa_\omega$. Then
 $\alpha$ and $\omega$ must
be straight lines. So, up to rotations, we can consider
$\alpha(t)=t+i\,a$, $a\in\R$, and $\omega(s)=s+i\,b$, $b\in\R$,
and it is a straightforward computation to get \eqref{minimalPhi}.

If $(a,b)\neq(0,0)$, considering $\varphi:\C^2\rightarrow\C^2$,
$\varphi(x_1+i\,y_1,x_2+i\,y_2)=(y_1+i\,y_2,x_1-i\,x_2)$ and $w=t+i\,s\in\C$, one easily obtains the holomorphic map
\begin{align*}
\left(\varphi\circ\Phi\right)(w)=\left((a+i\,b)\,w,-\frac{w^2}{2}\right)\subset\C^2.
\end{align*}
Letting $w=(a+i\,b)\,z$, we finally get $\left(\varphi\circ\Phi\right)(z)=\Bigl(z,-\frac{(a-ib)^2}{2(a^2+b^2)^2}\,z^2\Bigr)$ and the proof is finished.
\end{proof}

\vspace{0.3cm}

\subsection{Lagrangians with parallel mean curvature vector}
From the point of view of the mean curvature vector $H$, the easiest (non minimal) examples are those with parallel
mean curvature vector, i.e., $\nabla^\perp H=0$, where $\nabla^\perp$ is the connection in the normal bundle. In the
Lagrangian setting, the complex structure $J$ defines an isomorphism between the tangent and the normal bundles, so the
condition to have parallel mean curvature vector is equivalent to the fact that $JH$ is a parallel vector field.

In the next result, we show that the right  circular cylinder is the only Lagrangian with parallel mean curvature vector in our construction.
\begin{corollary}\label{cor_PMC}
The Lagrangian immersion $\Phi=\alpha \ast \omega $, given in Theorem \ref{thm_immersion}, has parallel (non null) mean
curvature vector if and only if $\alpha $ and $\omega$ are both circles centered at the origin.

Putting $\alpha (t)=e^{it}$ and $\omega (s)=R\,e^{is}$, $R>0$, then
$\Phi=\alpha\ast\omega$ can be written by
\begin{align}\label{PMCPhi}
\Phi(t,s)=\left( i(R^2 s -t),R\,e^{i(s+t)} \right), \ (t,s)\in\R^2
\end{align}
that describes the right circular cylinder $\R \times \s^1 (R)$.
\end{corollary}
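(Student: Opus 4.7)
\smallskip

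\noindent\textbf{Proof plan.} My plan is to translate the parallel mean curvature condition into a PDE for the Lagrangian angle $\beta$, separate variables, and then read off that $\alpha$ and $\omega$ must be circles centered at the origin. From \eqref{beta} one has $H=J\nabla\beta$, and \eqref{LagrangianAngle} gives $\beta(t,s)=F(t)+G(s)$ with $F'(t)=|\alpha'(t)|\kappa_{\alpha}(t)$ and $G'(s)=|\dot\omega(s)|\kappa_{\omega}(s)$. Because $\c^{2}$ is K\"ahler, the identity $\nabla^{\perp}_{X}(JY)=J\nabla_{X}Y$ for $Y$ tangent (extract the normal part of $\tilde\nabla_{X}(JY)=J\tilde\nabla_{X}Y$) yields $\nabla^{\perp}H=J\,\nabla(\nabla\beta)$. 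Hence $\nabla^{\perp}H=0$ if and only if $\mathrm{Hess}\,\beta\equiv 0$; equivalently, $\nabla\beta$ is a parallel tangent field on $(M,\Phi^{*}\langle\,,\,\rangle)$.

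\smallskip

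From the orthogonal metric \eqref{metricPhi} one immediately computes
\[
|H|^{2}=|\nabla\beta|^{2}=\frac{\kappa_{\alpha}(t)^{2}+\kappa_{\omega}(s)^{2}}{|\alpha(t)|^{2}+|\omega(s)|^{2}},
\]
which must be constant. Separating variables forces $\kappa_{\alpha}^{2}=c\,|\alpha|^{2}+k$ and $\kappa_{\omega}^{2}=c\,|\omega|^{2}-k$ for some constants $c,k$. Next I compute the mixed Hessian $\mathrm{Hess}\,\beta(\partial_{t},\partial_{s})=-\Gamma^{t}_{ts}F'(t)-\Gamma^{s}_{ts}G'(s)$ using the Christoffels of \eqref{metricPhi}; it reduces to the separated identity
\[
(|\omega|^{2})'(s)\,|\alpha'(t)|\kappa_{\alpha}(t)\,+\,(|\alpha|^{2})'(t)\,|\dot\omega(s)|\kappa_{\omega}(s)=0.
\]

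\smallskip

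Excluding the minimal case $\kappa_{\alpha}\equiv 0\equiv\kappa_{\omega}$ (which would give $H=0$), standard separation leaves two options: (a) $|\alpha|$ and $|\omega|$ are both constant, forcing $\alpha,\omega$ to be circles centered at the origin; or (b) there exists $\lambda\neq 0$ with $|\alpha'|\kappa_{\alpha}=\lambda(|\alpha|^{2})'$ and $|\dot\omega|\kappa_{\omega}=-\lambda(|\omega|^{2})'$. \emph{The main obstacle is to discard case (b).} Writing $\alpha$ in arc-length, combining (b) with $\kappa_{\alpha}^{2}=c|\alpha|^{2}+k$ yields the ODE $\lambda^{2}\bigl((|\alpha|^{2})'\bigr)^{2}=c|\alpha|^{2}+k$, whose solutions ($|\alpha|^{2}$ at most quadratic in arc-length) are inconsistent with the planar-curve compatibility $\alpha(u)=\int e^{i\int\kappa_{\alpha}\,du}\,du$ (a Fresnel-type integral) unless $|\alpha|$ is actually constant; the diagonal Hessian equations $\mathrm{Hess}\,\beta(\partial_{t},\partial_{t})=\mathrm{Hess}\,\beta(\partial_{s},\partial_{s})=0$ close out the argument, leaving case (a).

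\smallskip

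Finally, putting $\alpha(t)=e^{it}$ and $\omega(s)=Re^{is}$ one gets $\alpha'\overline{\alpha}=i$ and $\dot\omega\overline{\omega}=iR^{2}$, so both integrals in \eqref{alfaomega} are trivial and deliver, up to translation, $\Phi(t,s)=\bigl(i(R^{2}s-t),\,Re^{i(s+t)}\bigr)$, which is \eqref{PMCPhi}. The first component takes values on $i\,\r\cong\r$ and the second on $\s^{1}(R)\subset\c$; the linear map $(t,s)\mapsto(R^{2}s-t,\,t+s)$ from $\r^{2}$ onto $\r\times(\r/2\pi\z)$ is surjective, so $\Phi$ parametrizes the entire right circular cylinder $\r\times\s^{1}(R)$.
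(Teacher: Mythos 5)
Your framework is sound and, after unwinding, is the same as the paper's: since $H=J\nabla\beta$ and $\nabla^\perp_X(JY)=J\nabla_XY$ on a Lagrangian, parallelism of $H$ is equivalent to $\mathrm{Hess}\,\beta\equiv 0$, and with $\beta=F(t)+G(s)$, $F'=|\alpha'|\kappa_\alpha$, $G'=|\dot\omega|\kappa_\omega$ in the conformal metric $e^{2u}(dt^2+ds^2)$, $e^{2u}=|\alpha|^2+|\omega|^2$, the three Hessian components are exactly the paper's system
\[
\kappa_\alpha'-u_t\kappa_\alpha+u_s\kappa_\omega=0,\qquad u_s\kappa_\alpha+u_t\kappa_\omega=0,\qquad \dot\kappa_\omega-u_s\kappa_\omega+u_t\kappa_\alpha=0 .
\]
Your mixed-Hessian identity is the middle equation, your constancy of $|H|^2$ is a consequence of the system, and the ``if'' direction plus the explicit parametrization \eqref{PMCPhi} are correct.

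The genuine gap is exactly where you flag ``the main obstacle'': the exclusion of case (b) is asserted, not proved. Two specific problems. First, the claim that ``$|\alpha|^2$ at most quadratic in arc-length'' is ``inconsistent with the planar-curve compatibility \dots\ unless $|\alpha|$ is constant'' is false as stated: a straight line $\alpha(t)=t+ia$ has $|\alpha|^2=t^2+a^2$ quadratic in arclength, so quadraticity of $|\alpha|^2$ by itself rules out nothing; the appeal to a ``Fresnel-type integral'' is not an argument. Second, the step that actually kills case (b) --- ``the diagonal Hessian equations close out the argument'' --- is never carried out, and it is the substantive part of the proof. The paper does it by adding the two diagonal equations to get $\kappa_\alpha'+\dot\kappa_\omega=0$, hence $\kappa_\alpha(t)=at+b$, $\kappa_\omega(s)=-as+c$; in case (b) the middle equation gives $\kappa_\alpha=c_1u_t$, $\kappa_\omega=-c_1u_s$, so $u_t=(at+b)/c_1$, $u_s=(as-c)/c_1$, and substituting back into a diagonal equation yields $a-\bigl((at+b)^2+(as-c)^2\bigr)/c_1=0$ for all $(t,s)$, forcing $a=b=c=0$ and contradicting $H\neq 0$. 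You need to supply a computation of this kind (or an equivalent one combining $\kappa_\alpha=\lambda(|\alpha|^2)'$ with $\kappa_\alpha^2=c|\alpha|^2+k$ and the Frenet relations) before the corollary is established.
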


\begin{proof}
Without loss of generality we can consider $\alpha$ and $\omega$ curves parametrized  by arclength. From
\eqref{metricPhi} the induced metric is given by $e^{2u}\left(dt^2+ds^2\right)$, where $e^{2u}=|\alpha|^2+|\omega|^2$.
Using \eqref{MeanCurvatureVector}, it is not difficult to check that
$JH=-e^{-2u}(\kappa_\alpha\partial_t+\kappa_\omega\partial_s)$ is a parallel vector field if and only if
\begin{align}\label{curvatura-paralela}
\kappa_\alpha'-u_t\kappa_\alpha+u_s\kappa_\omega=0,\quad u_t\kappa_\omega+u_s\kappa_\alpha=0,\quad \dot\kappa_\omega-u_s\kappa_\omega+u_t\kappa_\alpha=0.
\end{align}
From the first and the third equation of
\eqref{curvatura-paralela} we deduce that
$\kappa_\alpha'+\dot\kappa_\omega=0$ and hence
$\kappa_\alpha(t)=a\,t+b$ and $\kappa_\omega(s)=-a\,s+c$, with
$a$, $b$, $c\in\R$. We distinguish three cases: We first suppose
that $u_s=0$. It is equivalent to $|\omega|$ is constant. Using
\eqref{curvatura-paralela} and the fact that $\phi$ is non
minimal, it follows that $u_t=0$ what means that $|\alpha|$ is
also constant. If $u_t=0$ similarly we get that $|\alpha|$ and
$|\omega|$ are constants. Finally, if $u_t\neq0$ and $u_s\neq0$,
from the second equation of \eqref{curvatura-paralela}, there
exists $c_1\in\R^*$ such that $\kappa_\alpha=c_1u_t$ and
$\kappa_\omega=-c_1u_s$. So $u_t=(at+b)/c_1$ and $u_s=(as-c)/c_1$.
Putting this information in \eqref{curvatura-paralela}, we arrive
at $a=b=c=0$ which is a contradiction since we are assuming that
$H\neq 0$.
\end{proof}

\vspace{0.3cm}

\subsection{Hamiltonian stationary Lagrangians}
A Lagrangian surface is called {\em Hamiltonian stationary} if the
Lagrangian angle $\beta $ is harmonic, i.e. $\Delta \beta =0$,
where $\Delta $ is the Laplace operator on $M$. Using
\eqref{beta}, this is equivalent to the vanishing of the
divergence of the tangent vector field $JH$. Hamiltonian
stationary Lagrangian (in short HSL) surfaces are critical points
of the area functional with respect to a special class of
infinitesimal variations preserving the Lagrangian constraint;
namely, the class of compactly supported Hamiltonian vector fields
(see \cite{O}). Special Lagrangians and Lagrangians with parallel
mean curvature vector are trivial examples of HSL surfaces in
$\c^2$; more interesting examples can be found in \cite{A}
\cite{AC}, \cite{CU2} and \cite{HR1}.
\begin{corollary}\label{cor_HSL}
The immersion $\Phi=\alpha \ast \omega $, given in Theorem \ref{thm_immersion}, is Hamiltonian stationary Lagrangian if
and only if the curvature functions $k_\alpha$ and $k_\omega$ of $\alpha$ and $\omega$ are given by $k_\alpha (t)=at+b$
and $k_\omega (s)= -as+c$, with $a,b,c\in \R$,  and where $t$ and $s$ are the arclength parameters of $\alpha$ and
$\omega$, respectively.
\end{corollary}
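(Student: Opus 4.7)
The plan is to exploit the explicit formula \eqref{LagrangianAngle} for the Lagrangian angle together with the fact that the induced metric \eqref{metricPhi} is conformally flat in arclength parameters. Recall that Hamiltonian stationary means $\Delta \beta = 0$.

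First I would reparametrize $\alpha$ and $\omega$ by arclength, so that $|\alpha'|\equiv 1$ and $|\dot\omega|\equiv 1$. Under this choice the induced metric from \eqref{metricPhi} becomes
\[
\Phi^*\langle\,,\rangle = e^{2u}(dt^2 + ds^2), \qquad e^{2u} := |\alpha(t)|^2 + |\omega(s)|^2,
\]
which is conformally flat. Hence the Laplace--Beltrami operator acts on functions by $\Delta = e^{-2u}(\partial_t^2 + \partial_s^2)$, and the harmonicity condition $\Delta\beta = 0$ reduces simply to $\beta_{tt} + \beta_{ss} = 0$.

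Next, from \eqref{LagrangianAngle} we have $\beta(t,s) = \arg(\alpha'(t)) + \arg(\dot\omega(s)) + \pi$, which in the arclength parametrization satisfies $\beta_t = \kappa_\alpha(t)$ and $\beta_s = \kappa_\omega(s)$ (using the identities $(\arg\alpha')' = |\alpha'|\kappa_\alpha$ and $(\arg\dot\omega)^{\cdot} = |\dot\omega|\kappa_\omega$ recalled in the proof of Theorem \ref{thm_immersion}). Therefore
\[
\beta_{tt} + \beta_{ss} = \kappa_\alpha'(t) + \kappa_\omega'(s).
\]

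Finally, the equation $\kappa_\alpha'(t) + \kappa_\omega'(s) = 0$ must hold for all $(t,s)\in I_1\times I_2$. A standard separation-of-variables argument forces each side to be a common constant: $\kappa_\alpha'(t) = a = -\kappa_\omega'(s)$ for some $a\in\R$. Integrating gives $\kappa_\alpha(t) = at + b$ and $\kappa_\omega(s) = -as + c$, and the converse is immediate by reversing these steps. The only real step requiring care is making sure the conformal factor truly factors out of the harmonic equation, but this is automatic in dimension two for a conformally flat metric, so no serious obstacle arises.
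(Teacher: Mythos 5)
Your proposal is correct and follows essentially the same route as the paper: parametrize both curves by arclength so that $\Phi$ is conformal, use the conformal invariance of harmonicity in dimension two to reduce $\Delta\beta=0$ to $\kappa_\alpha'(t)+\dot\kappa_\omega(s)=0$, and conclude by separation of variables. You merely spell out the intermediate steps (the form of the Laplace--Beltrami operator and the identities $\beta_t=\kappa_\alpha$, $\beta_s=\kappa_\omega$) that the paper leaves implicit.
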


\begin{proof}
Without loss of generality we can consider $\alpha$ and $\omega$ parametrized  by arclength and, in this way $\Phi$ is
conformal. The Lagrangian surface $\Phi=\alpha\ast\omega$ is Hamiltonian stationary if and only if the Lagrangian angle
map $\beta $ verifies  $\Delta\beta=0$. So, using \eqref{LagrangianAngle}, we get that $\Phi$ is HSL if and only if the
curvature functions $\kappa_\alpha$ and $\kappa_\omega$ of $\alpha$ and $\omega$ satisfy
$\kappa_\alpha'+\dot\kappa_\omega=0$, where $'$ and $^\cdot$ denote the derivatives respect to  the arclength
parameters $t$ and $s$, respectively. Then there exists $a\in\R$ such that $\kappa_\alpha'=a=-\dot\kappa_\omega$ and
this finishes the proof of the corollary.
\end{proof}

We distinguish two essential cases in the family described in
Corollary \ref{cor_HSL}:

\vspace{0.1cm}

\emph{Case 1}: $a=0$, i.e.\ $\kappa_\alpha \equiv b$ and
$\kappa_\omega \equiv c$. So, $\alpha$ and $\omega$ are either
straight lines or circles. If $\alpha$ and $\omega$ are both
straight lines we lie in the situation of Corollary
\ref{cor_SpecialLagrangians} obtaining the special Lagrangians
described there. Otherwise, we get the following subcases: either
$b\neq0$ and $c=0$ (or $b=0$ and $c\neq0$) or $b\neq0$ and
$c\neq0$.

First, if $b\neq0$ and $c=0$, taking $\alpha(t)=a_0+R\,e^{it/R}$,
with $R=1/|b|>0$, and $\omega(s)=s+i\,b_0$, $a_0,b_0\geq0$, then
$\Phi=\alpha \ast \omega$ can be written, up to a translation in
$\C^2$, by
\begin{align*}
\Phi(t,s)=\left(\frac{s^2}{2}-R\,a_0\,e^{it/R}-i(b_0\,s+Rt),a_0s+R(s+ib_0)e^{it/R}\right),\,
(t,s)\in\R^2.
\end{align*}
In particular, when $a_0=b_0=0$, $R=1$ we get $(t,s)\mapsto\left(\frac{s^2}{2}-it,s\,e^{it}\right)$, which corresponds
to the complete non-trivial HSL plane described in Corollary 3.5 of \cite{CL12}.

Second, if $b\neq0$ and $c\neq0$, up to a dilation we can consider
$b=1$, $|c|=1/R$, and take $\alpha (t)=a_1+e^{it}$ and $\omega
(s)=a_2+R e^{is/R}$, $a_1,a_2\geq 0, \, R>0$. Then $\Phi=\alpha
\ast \omega$ can be written, up to a translation in $\C^2$, by
\begin{align*}
\Phi(t,s)=\left( a_2 R e^{is/R}\!-\! a_1 e^{it}\!+\!i(R s -t), a_1
R e^{is/R}\!+\! a_2 e^{it}\!+ \!R\,e^{i(t+s/R)} \right),
\,(t,s)\in \R^2.
\end{align*}
In particular, when $a_1=a_2=0$ we recover the right  circular cylinder $\R\times\mathbb{S}^1(R)$.

The above immersions provide conformal parametrizations of HSL
complete surfaces, some of them studied in \cite{A} from a different approach.

\vspace{0.1cm}

\emph{Case 2}: $a\neq0$, i.e.\ $\kappa_\alpha$ and $\kappa_\omega$
are certain linear functions of the arc parameter. After applying
suitable translations on the parameter, we can consider
$\kappa_\alpha(t)=a\,t$ and $\kappa_\omega(s)=-a\,s$. Thus, in
this case $\alpha$ and $\omega$ must be Cornu spirals with
opposite parameter. The corresponding immersions
$\alpha\ast\omega$ provide new examples of HSL surfaces.

\vspace{0.3cm}

\subsection{Lagrangians with constant mean curvature}
A Lagrangian surface has constant mean curvature if $|H|$ is
constant. Examples of Lagrangians with constant mean curvature can
be found in \cite{CU4}
\begin{corollary}\label{cor_Hconstant}
The immersion $\Phi=\alpha \ast \omega $, given in Theorem \ref{thm_immersion}, has constant mean curvature
$|H|\equiv\rho>0$ if and only if the curvature functions $k_\alpha$ and $k_\omega$ of $\alpha$ and $\omega$ satisfy,
respectively, $\kappa_\alpha^2=\rho^2|\alpha|^2-\lambda$ and $\kappa_\omega^2=\rho^2|\omega|^2+\lambda$, for some
$\lambda\in\R$.
\end{corollary}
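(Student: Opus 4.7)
The plan is to reduce the equation $|H|^2 = \rho^2$ to a single scalar equation by a direct computation from \eqref{MeanCurvatureVector}, and then apply a separation-of-variables argument.

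First I would compute $|H|^2$ explicitly. Since $\Phi_t$ and $\Phi_s$ are orthogonal (as established inside the proof of Theorem \ref{thm_immersion}) and $J$ is an isometry of $\c^2$, the vector fields $J\Phi_t$ and $J\Phi_s$ are also orthogonal. Moreover, $|J\Phi_t|^2 = |\Phi_t|^2 = |\alpha'|^2(|\alpha|^2+|\omega|^2)$ and $|J\Phi_s|^2 = |\dot\omega|^2(|\alpha|^2+|\omega|^2)$. Substituting these in \eqref{MeanCurvatureVector} and squaring, the factors of $|\alpha'|^2$ and $|\dot\omega|^2$ cancel against those in the denominators and I obtain the clean expression
\begin{equation*}
|H|^2 = \frac{\kappa_\alpha^2 + \kappa_\omega^2}{|\alpha|^2 + |\omega|^2},
\end{equation*}
which is manifestly independent of the parametrization of each generating curve.

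Next, the condition $|H|^2 \equiv \rho^2$ becomes the functional equation
\begin{equation*}
\kappa_\alpha(t)^2 + \kappa_\omega(s)^2 = \rho^2\bigl(|\alpha(t)|^2 + |\omega(s)|^2\bigr) \qquad \forall(t,s)\in I_1\times I_2,
\end{equation*}
which I rearrange as
\begin{equation*}
\kappa_\alpha(t)^2 - \rho^2|\alpha(t)|^2 \;=\; \rho^2|\omega(s)|^2 - \kappa_\omega(s)^2.
\end{equation*}
The left-hand side depends only on $t$ and the right-hand side only on $s$; since $t$ and $s$ are independent variables ranging over intervals, both sides must equal a common real constant, which I name $-\lambda$. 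This yields exactly $\kappa_\alpha^2 = \rho^2|\alpha|^2 - \lambda$ and $\kappa_\omega^2 = \rho^2|\omega|^2 + \lambda$, and the converse is immediate by substituting back into the formula for $|H|^2$.

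There is no genuine obstacle here; the only delicate point is noticing that the orthogonality of $\Phi_t$ and $\Phi_s$ lets $|H|^2$ collapse to a symmetric ratio with no cross term, after which the separation-of-variables step is forced. It is worth remarking in passing that the constant $\lambda$ is constrained by $\rho^2|\alpha|^2 \geq \lambda \geq -\rho^2|\omega|^2$ on the domain of definition, so that the two squared curvatures remain non-negative.
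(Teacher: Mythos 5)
Your proof is correct and follows essentially the same route as the paper: both derive $|H|^2=(\kappa_\alpha^2+\kappa_\omega^2)/(|\alpha|^2+|\omega|^2)$ from \eqref{MeanCurvatureVector} and then separate variables to introduce the constant $\lambda$. Your version merely spells out the orthogonality computation that the paper leaves implicit, and adds a harmless remark on the admissible range of $\lambda$.
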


\begin{proof}
Using the expresion \eqref{MeanCurvatureVector}, it follows that
\begin{align*}
|H|^2=\frac{1}{|\alpha|^2+|\omega|^2}\left(\kappa_\alpha^2+\kappa_\omega^2\right).
\end{align*}
If $|H|\equiv\rho$, since $\alpha$ depends on $t$ and $\omega$
depends on $s$, there exists $\lambda\in\R$ such that
$\kappa_\omega^2-\rho^2|\omega|^2=\lambda=\rho^2|\alpha|^2-\kappa_\alpha^2$,
what proves the result.
\end{proof}

Now we show how the conditions on $\alpha$ and $\omega$ given in
Corollary \ref{cor_Hconstant} determine both curves. If we take
$\alpha$ and $\omega$ planar curves parametrized  by arclength,
they can be written as follows
\begin{align*}
\alpha &=r_1\,e^{i\int\frac{\sqrt{1-r_1'^2}}{r_1}},\quad
r_1=r_1(t)=|\alpha(t)|,\\
\omega &=r_2\,e^{i\int\frac{\sqrt{1-\dot r_2^2}}{r_2}},\quad r_2=r_2(s)=|\omega(s)|.
\end{align*}
It is not difficult to check that the curvatures $\kappa_\alpha$ and $\kappa_\omega$ can be expressed in terms of the
derivatives of $r_1$ and $r_2$ by the following equations:
\begin{align*}%\label{eq-kappa}
r_1\sqrt{1-r_1'^2}\,\kappa_\alpha=1-r_1'^2-r_1r_1'',\qquad r_2\sqrt{1-\dot r_2^2}\,\kappa_\omega=1-\dot r_2^2-r_2\ddot r_2.
\end{align*}
Studying the case of Corollary \ref{cor_Hconstant}, i.e.
$\kappa_\alpha^2=\rho^2r_1^2-\lambda$  and
$\kappa_\omega^2=\rho^2r_2^2+\lambda$, we get the following
ordinary differential equations for $r_1$ and $r_2$:
\begin{align}\label{edo-r1}
(1-r_1'^2-r_1r_1'')^2=\left(\rho^2 r_1^2-\lambda\right)r_1^2(1-r_1'^2),
\end{align}
\begin{align}\label{edo-r2}
(1-\dot r_2^2-r_2\ddot r_2)^2=\left(\rho^2 r_2^2+\lambda\right)r_2^2(1-\dot r_2^2).
\end{align}
When we consider $r_1$ and $r_2$ constant, we recover the right
circular cylinder obtained in the Corollary \ref{cor_PMC}
corresponding to the parallel mean curvature case.

In the general case, we are able to obtain first integrals of the differential equations \eqref{edo-r1} and \eqref{edo-r2}:
\begin{align}\label{eq-rt}
\frac{\left(\rho^2 r_1^2-\lambda\right)^{3/2}}{\rho^2}+\mu_1=3\,r_1\sqrt{1-r_1'^2},
\end{align}
and
\begin{align}\label{eq-rs}
\frac{\left(\rho^2 r_2^2+\lambda\right)^{3/2}}{\rho^2}+\mu_2=3\,r_2\sqrt{1-\dot r_2^2},
\end{align}
where $\mu_1$ and $\mu_2$ are arbitrary constants.

This shows that the family of Lagrangian surfaces with constant
mean curvature $\rho>0$ in our construction with planar curves is
very large. In general, the solutions of \eqref{eq-rt} and
\eqref{eq-rs} are not easy to control, appearing hyperelliptic
functions in most cases. We finish this section considering the
following illustrative situation:

Let $\lambda=\mu_1=\mu_2=0$. Up to dilations, we can suppose $\rho=3$. Then equations \eqref{eq-rt} and \eqref{eq-rs}
coincide and they are reduced to the differential equation
\begin{equation}\label{eq-r3}
r'^2+r^4=1
\end{equation}
Using (\ref{eq-r3}) we get that the generatrix curves are given by
\begin{equation}\label{eq-curvas3}
\alpha (t) = r(t) e^{i\int r(t)dt}, \quad \omega (s) = r(s) e^{i\int r(s)ds}
\end{equation}
and so, taking into account (\ref{eq-r3}) again, we arrive at
\begin{align}\label{eq-surface3}
\ (\alpha \ast \omega)(t,s)=\Biggl( \frac{r(s)^2-r(t)^2}{2} + i \left( \int r(s)^3 ds - \int r(t)^3 dt \right)
,
\\
 r(t)r(s) e^{i (\int r(t)dt + \int r(s)ds)} \Biggr) \nonumber
\end{align}
The solution of (\ref{eq-r3}) can be expressed in terms of some elliptic functions. Concretely, from formulas 160.01
and 318.01 of \cite{BF}, we can write that
\[
r(t)=\text{sn}(t,i)=\frac{1}{\sqrt 2}\frac{\text{sn}(\sqrt 2 \,t,1/\sqrt 2)}{\text{dn}(\sqrt 2 \,t,1/\sqrt 2)},
\]
and
\[
\theta(t):= \int r(t)dt = -\arctan \left( \frac{1+\text{cn}(\sqrt 2 \,t,1/\sqrt 2)}{1-\text{cn}(\sqrt 2 \,t,1/\sqrt 2)}
\right),
\]
where $\text{sn}$, $\text{cn}$ and $\text{dn}$ are the elementary Jacobi elliptic functions usually known as sine
amplitude, cosine amplitude and delta amplitude respectively (see \cite{BF} for background in Jacobi elliptic
functions). Then it is not difficult to get that $r=\sin 2 \theta$ and hence $\alpha $ and $\omega $ are both
Bernoulli's lemniscatae (see Figure \ref{Lemniscata}). Finally, using formula 318.03 of \cite{BF}, we can conclude from
(\ref{eq-surface3}) that $\Phi=\alpha \ast \omega$ is given explicitly by the following:
\begin{align*}
\Phi (t,s)= \frac{1}{4} \biggl( \text{sd}^2(\sqrt 2 \,s)-\text{sd}^2(\sqrt 2 \,t)+i( \text{cd}(\sqrt 2
\,t)\text{nd}(\sqrt 2 \,t)-
\text{cd}(\sqrt 2 \,s) \text{nd}(\sqrt 2 \,s)) , \\
 2 \text{sd}(\sqrt 2 \,t)\text{sd}(\sqrt 2 \,s) \exp \left( -i\left( \arctan \left( \tfrac{1+\text{cn}(\sqrt
2\,t)}{1-\text{cn}(\sqrt 2 \,t)} \right)+\arctan \left( \tfrac{1+\text{cn}(\sqrt 2\,s)}{1-\text{cn}(\sqrt 2 \,s)}
\right) \right) \right) \biggr),
\end{align*}
\begin{figure}[h!]
\begin{center}
\includegraphics[height=3cm]{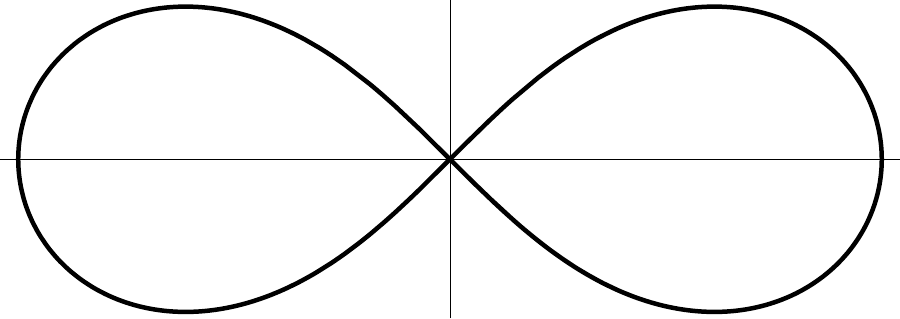}\caption{Bernoulli's lemniscata.}\label{Lemniscata}
\end{center}
\end{figure}
where $\text{sd}u=\frac{\text{sn}u}{\text{dn}u}$, $\text{cd}u=\frac{\text{cn}u}{\text{dn}u}$ and $\text{nd}u=\frac{1}{\text{dn}u}$.
Since it is doubly-periodic, this provides a (branched) Lagrangian torus with constant mean curvature vector $|H|^2=9$. In Figure \ref{Hcte} we illustrate the projections of $\Phi$ to the coordinate 3-spaces of $\R^4$.

\begin{center}
\begin{figure}[h]
\begin{center}
\begin{tabular}{ccc}
\includegraphics[height=4.5cm]{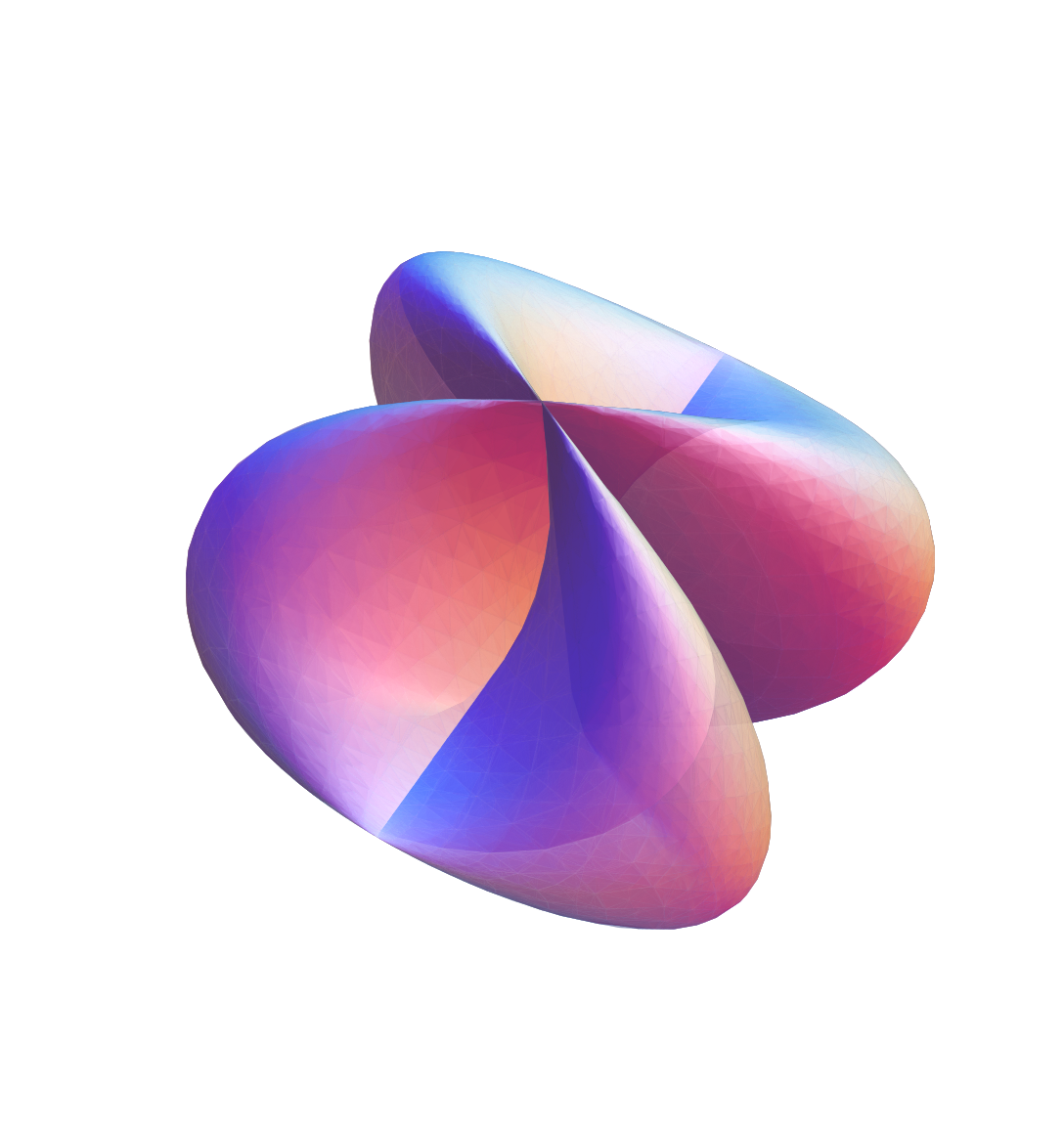} &\hspace{1cm} & \includegraphics[height=4.5cm]{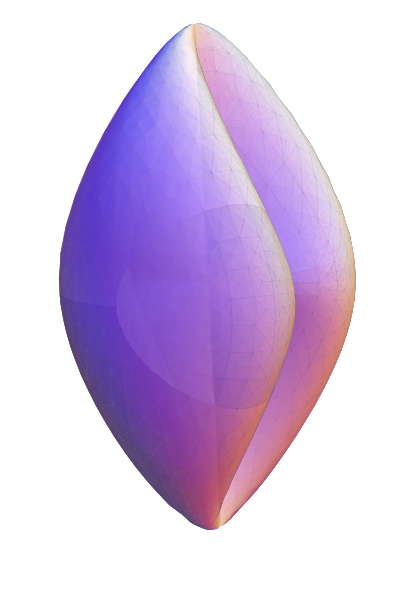}\\
\includegraphics[height=4.5cm]{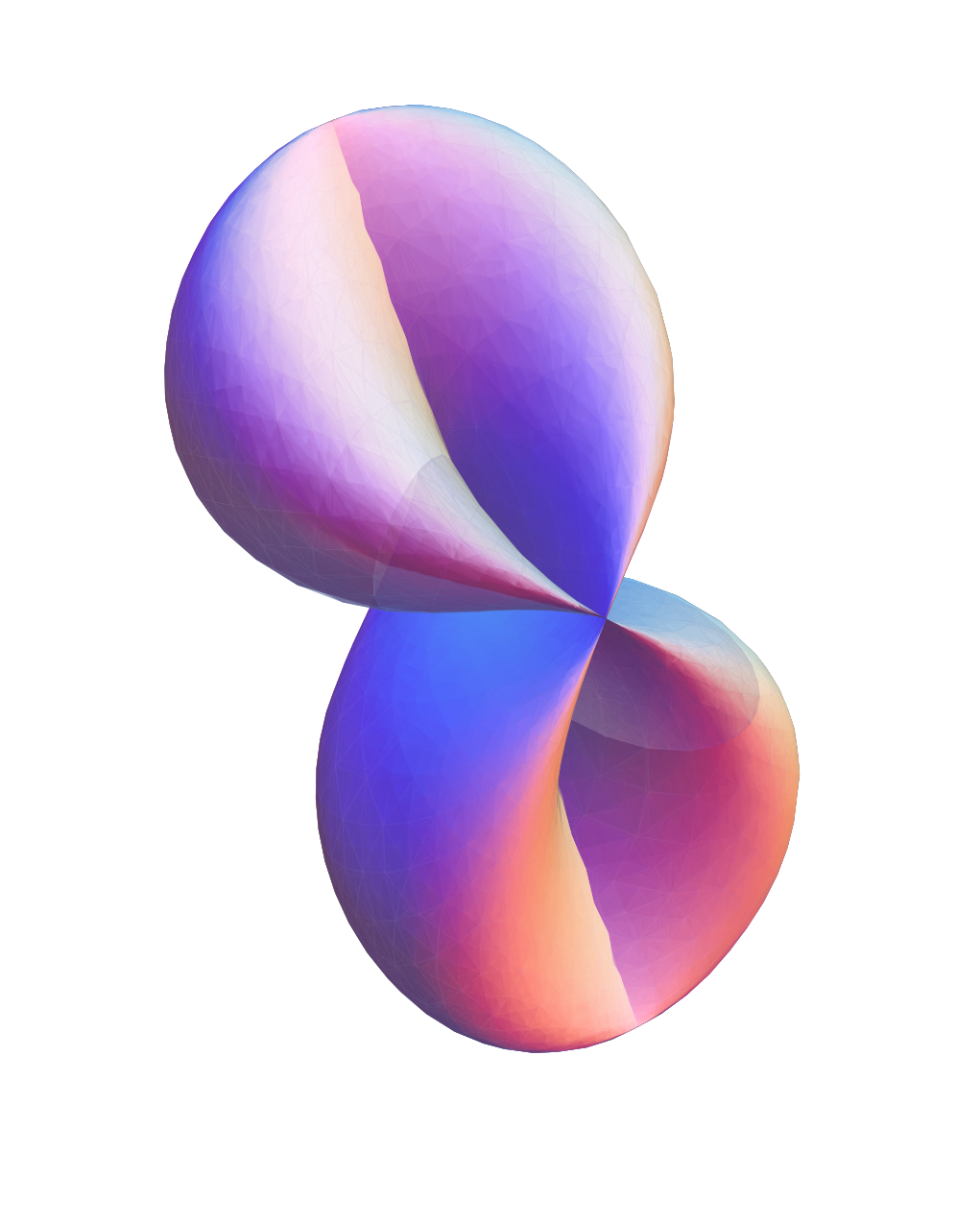} &\hspace{1cm} & \includegraphics[height=4.5cm]{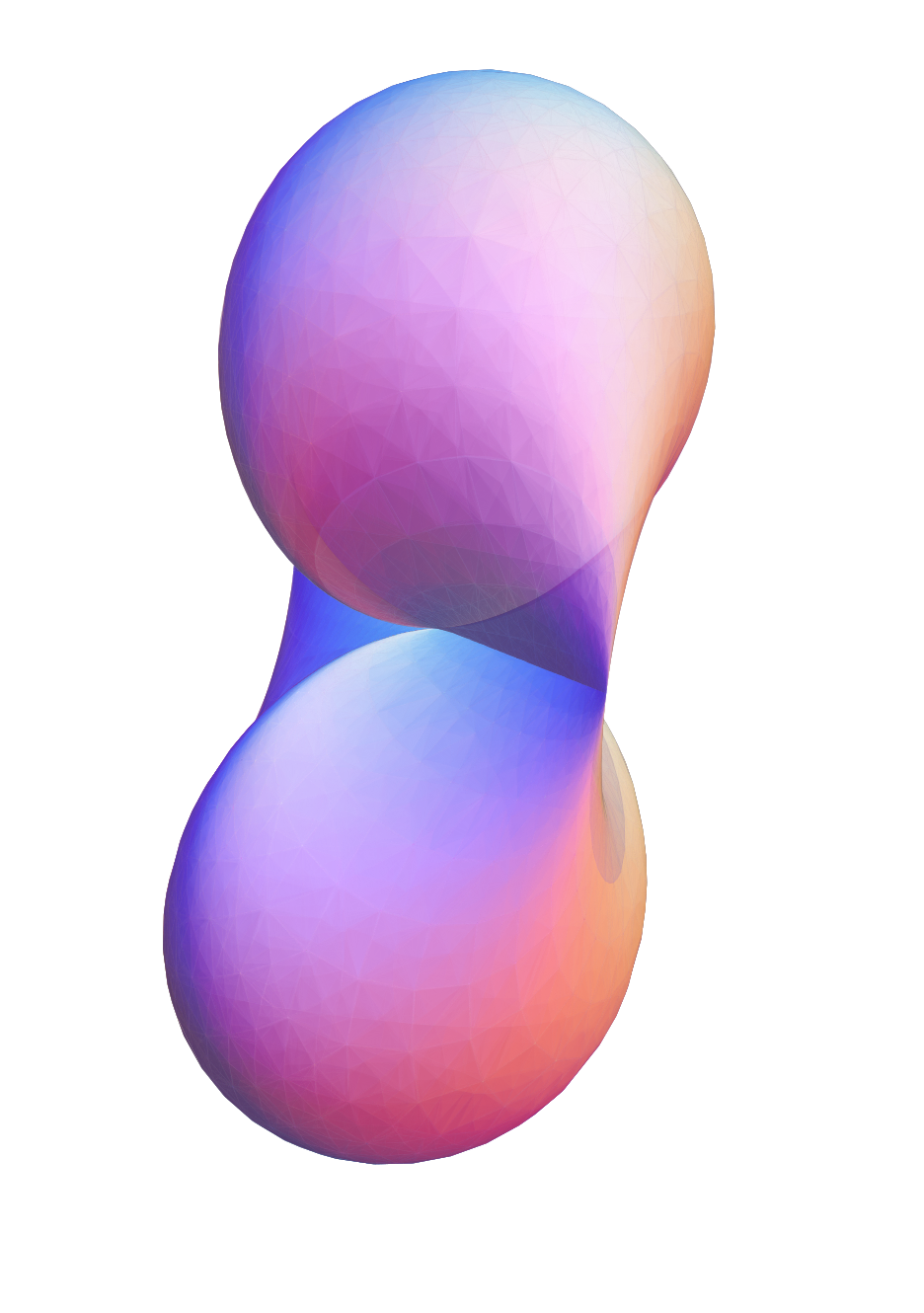}\\
\end{tabular}
\caption{Projections of the Lagrangian torus with $|H|^2=9$ to the coordinate 3-spaces of
$\C^2\equiv\R^4$.}\label{Hcte}
\end{center}
\end{figure}
\end{center}
\vspace{0.3cm}

\subsection{Lagrangian self-similar solitons}
An immersion $\phi: M\rightarrow \r^4$ is called a self-similar
solution for mean curvature flow if
\begin{equation}\label{self}
H=\pm\phi^\perp
\end{equation}
where $\phi^\perp$ denotes the normal projection of the position
vector $\phi$. If
$H=-\phi^\perp$ it is called a self-shrinker, and if
$H=\phi^\perp$ it is called a self-expander. Examples of Lagrangian self-shrinkers and self-expanders can be found in \cite{CL10}.

In the next result we show that the cylinder $\mathbb{S}^1\times\R$ is the only (non totally geodesic) self-shrinker
for the mean curvature flow in our construction.

\begin{corollary}\label{cor_self-similar}
The Lagrangian immersion $\Phi=\alpha \ast \omega$, given in Theorem \ref{thm_immersion}, is a (non totally geodesic)
self-similar solution for mean curvature flow if and only if $\alpha $ and $\omega$ are both circles of radius one
centered at the origin, so that $\Phi=\alpha\ast\omega$ describes the right circular cylinder $\R \times \s^1$.
\end{corollary}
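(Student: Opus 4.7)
The plan is to translate the self-similar condition $H=\epsilon\Phi^\perp$ (with $\epsilon=\pm 1$ for shrinker/expander) into scalar equations on the generating curves and then analyze them by separation of variables. Assume $\alpha$ and $\omega$ are parametrized by arclength, so that $\Phi$ is conformal with $|\Phi_t|^2=|\Phi_s|^2=|\alpha|^2+|\omega|^2=:e^{2u}$ and $\{e^{-u}J\Phi_t,\,e^{-u}J\Phi_s\}$ is an orthonormal normal frame. Expanding $\Phi^\perp$ in this frame and comparing with formula \eqref{MeanCurvatureVector} for $H$, the condition $H=\epsilon\Phi^\perp$ becomes the pair of scalar equations
\[
\kappa_\alpha(t)=\epsilon\,\langle\Phi,J\Phi_t\rangle,\qquad \kappa_\omega(s)=\epsilon\,\langle\Phi,J\Phi_s\rangle.
\]
A direct computation from $\Phi_t=(-\alpha'\bar\alpha,\alpha'\omega)$ gives $\langle\Phi,J\Phi_t\rangle=\mathrm{Im}\bigl(\alpha\overline{\alpha'}(|\omega|^2-\phi_1)\bigr)$, where $\phi_1$ is the first complex component of $\Phi$, and a symmetric identity holds for $\langle\Phi,J\Phi_s\rangle$.

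Since $\kappa_\alpha(t)$ depends only on $t$, differentiating the first scalar equation in $s$ and using $\partial_s(|\omega|^2-\phi_1)=\omega\overline{\dot\omega}$ forces the separation condition
\[
\mathrm{Im}\bigl(\alpha(t)\overline{\alpha'(t)}\cdot\omega(s)\overline{\dot\omega(s)}\bigr)\equiv 0.
\]
Because the two factors depend on independent variables and are nowhere zero, each of $\alpha\overline{\alpha'}$ and $\omega\overline{\dot\omega}$ must have constant complex argument, and those two arguments must be opposite modulo $\pi$. Writing $\alpha=r_1 e^{i\theta_1}$ yields $\alpha\overline{\alpha'}=r_1 r_1'-i r_1^2 \theta_1'$, and constancy of its argument is equivalent to $\theta_1'=k(\ln r_1)'$ for some constant $k$. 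Hence $\alpha$ (and symmetrically $\omega$) must be either a circle centered at the origin, a ray emanating from the origin, or a logarithmic spiral centered at the origin; matching of arguments reduces the compatible pairs to ray--ray, circle--circle, and spiral--spiral.

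The last step is to decide which of these sub-cases actually solves the full scalar equations, separation being only necessary. The ray--ray case gives $\kappa_\alpha=\kappa_\omega=0$ and reproduces the totally geodesic Lagrangian plane \eqref{totallyGeodesic}, excluded by hypothesis. For spiral--spiral, with $\alpha(t)=Ke^{(-1+2ci)t}$ and $\omega(s)=Le^{(-1-2ci)s}$, the integrals defining $\phi_1$ can be evaluated in closed form, and one verifies that $\alpha\overline{\alpha'}(|\omega|^2-\phi_1)$ is in fact purely real (independently of the free translation constant); the scalar equation would then force $\kappa_\alpha\equiv 0$, contradicting the non-zero curvature of a genuine logarithmic spiral. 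Only the circle--circle case remains: inserting $\alpha(t)=R_1 e^{it/R_1}$ and $\omega(s)=R_2 e^{is/R_2}$ into the two scalar equations yields $\epsilon=-1$ together with $R_1 R_2=1$, and with the natural normalization $R_1=R_2=1$ one recovers the right circular cylinder $\R\times\s^1$. The principal obstacle is the elimination of the spiral--spiral sub-case: the separation condition is compatible with logarithmic spirals, and only the explicit evaluation of $\phi_1$ reveals the unexpected real structure of $\alpha\overline{\alpha'}(|\omega|^2-\phi_1)$ that drives the contradiction.
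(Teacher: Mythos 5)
Your argument follows the same route as the paper's proof up to and including the decisive step: you expand $\Phi^\perp$ in the normal frame $\{J\Phi_t,J\Phi_s\}$, equate with \eqref{MeanCurvatureVector} to get the two scalar equations, and cross-differentiate to obtain the separation identity $\im\bigl(\alpha\overline{\alpha'}\,\omega\overline{\dot\omega}\bigr)\equiv 0$, which is exactly the paper's condition \eqref{condi-kappa} written in complex form. Where you add something is afterwards: the paper merely asserts that \eqref{condi-kappa} together with the curvature equations forces $\epsilon=-1$ and circles centered at the origin, whereas you make the trichotomy ray/circle/logarithmic spiral explicit and eliminate each unwanted case. That is precisely the part a reader must supply, and your outline of it is correct.

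One step is, however, stated inaccurately, although the conclusion survives. In the spiral--spiral case the product $\alpha\overline{\alpha'}\,(|\omega|^2-\phi_1)$ is \emph{not} purely real independently of the additive constant in $\phi_1$: with $\alpha=Ke^{(-1+2ci)t}$, $\omega=Le^{(-1-2ci)s}$ and $\phi_1$ shifted by a constant $C$, a direct computation gives
\begin{equation*}
\im\bigl(\alpha\overline{\alpha'}(|\omega|^2-\phi_1)\bigr)=K^2e^{-2t}\,\im\bigl((1+2ci)\,C\bigr),
\end{equation*}
and for the base-point constants built into the definite integrals of Theorem \ref{thm_immersion} this is generically nonzero. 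The contradiction should instead be drawn from the shape of both sides: the left-hand side of the scalar equation is $|\alpha'|\kappa_\alpha=(\arg\alpha')'=2c$, a nonzero constant, while the right-hand side is either identically zero or a nonconstant multiple of $e^{-2t}$; in either case $c=0$, which is not a spiral. With that one-line repair the elimination of the spiral case is sound. A final cosmetic point: the circle--circle case yields only $R_1R_2=1$, not $R_1=R_2=1$; all such pairs parametrize the same unit cylinder $\R\times\s^1$, so the statement of the corollary is unaffected, but this should be said explicitly rather than imposed as a ``normalization''.
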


\begin{proof}
We first calculate $\Phi^\perp$ as follows:
\begin{align*}
\Phi^\perp=&\frac{1}{|\alpha|^2+|\omega|^2}\left(\frac{\im\left(\Phi,\Phi_t\right)}{|\alpha'|^2}\,J\Phi_t+\frac{\im\left(\Phi,\Phi_s\right)}{|\dot\omega|^2}\,J\Phi_s\right)\\
=&\frac{1}{|\alpha'|^2(|\alpha|^2+|\omega|^2)}\Biggl(\frac{|\alpha|^2+|\omega|^2}{2}\langle\alpha,J\alpha'\rangle\\
&+\left(\int\langle\alpha',J\alpha\rangle-\int\langle\dot\omega,J\omega\rangle\right)\langle\alpha,\alpha'\rangle\Biggr)J\Phi_t\\
&+\frac{1}{|\dot\omega|^2(|\alpha|^2+|\omega|^2)}\Biggl(\frac{|\alpha|^2+|\omega|^2}{2}\langle\omega,J\dot\omega\rangle\\
&+\left(\int\langle\dot\omega,J\omega\rangle-\int\langle\alpha',J\alpha\rangle\right)\langle\omega,\dot\omega\rangle\Biggr)J\Phi_s.
\end{align*}
Taking into account \eqref{MeanCurvatureVector}, we have that
$H=\epsilon\,\Phi^\perp$, $\epsilon=\pm1$, if and only if  the curvatures of the
curves $\alpha$ and $\omega$ satisfy
\begin{align*}
\kappa_\alpha=&\epsilon\left(\frac{|\alpha|^2+|\omega|^2}{2}\langle\alpha,J\alpha'\rangle+\left(\int\langle\alpha',J\alpha\rangle-\int\langle\dot\omega,J\omega\rangle\right)\langle\alpha,\alpha'\rangle\right),\\
\kappa_\omega=&\epsilon\left(\frac{|\alpha|^2+|\omega|^2}{2}\langle\omega,J\dot\omega\rangle+\left(\int\langle\dot\omega,J\omega\rangle-\int\langle\alpha',J\alpha\rangle\right)\langle\omega,\dot\omega\rangle\right).
\end{align*}
Now we derivate the above equations with respect to $s$ and $t$ respectively and we
obtain the following necessary condition
\begin{align}\label{condi-kappa}
\langle\omega,\dot\omega\rangle\langle\alpha,J\alpha'\rangle=\langle\dot\omega,J\omega\rangle\langle\alpha,\alpha'\rangle.
\end{align}
Using \eqref{condi-kappa} together with the conditions on
$\kappa_\alpha$ and $\kappa_\omega$, we get that necessarily $\epsilon=-1$ and so $\Phi$ is a
self-shrinker and  the only possibility is that the curves $\alpha$ and $\omega$ are both
circles centered at the origin. Corollary \ref{cor_PMC} finishes the proof.
\end{proof}

\vspace{0.3cm}

\subsection{Lagrangian translating solitons}
An immersion $\phi: M\rightarrow \r^4$ is called a translating
soliton for mean curvature flow if
\begin{equation}\label{trl}
H={\bf e}^\perp
\end{equation}
for some nonzero constant vector ${\bf e}\in \R^4$, where ${\bf
e}^\perp $ denotes the normal projection of the vector $\bf e $, which can be fixed up to congruences. Examples of Lagrangian translating solitons can be found in \cite{CL12}.

\begin{corollary}\label{cor_translating}
The Lagrangian immersion $\Phi=\alpha \ast \omega $, given in Theorem \ref{thm_immersion}, is a translating soliton
with translating vector $\mathbf{e}=\left(\rho\,e^{i\theta},0\right)\in\c^2$ if and only if the planar curves $\alpha$
and $\omega$ satisfy that
\begin{equation}\label{curvaturas_translating}
|\alpha'|\kappa_\alpha=\rho\,\im\left(e^{-i\,\theta}\alpha'\overline\alpha\right),\qquad
|\dot\omega|\kappa_\omega=-\rho\,\im\left(e^{-i\,\theta}\dot\omega\overline\omega\right).
\end{equation}
\end{corollary}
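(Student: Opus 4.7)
The plan is to decompose the constant vector $\mathbf{e}=(\rho e^{i\theta},0)$ into its tangent and normal parts along $\Phi$ and then match $\mathbf{e}^\perp$ against the explicit expression \eqref{MeanCurvatureVector} for $H$. The vector fields $\Phi_t,\Phi_s$ are orthogonal and of known norm, so $J\Phi_t,J\Phi_s$ form an orthogonal frame for the normal bundle, and the projection reduces to two inner products.

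More precisely, I would write
\[
\mathbf{e}^\perp = \frac{\langle \mathbf{e},J\Phi_t\rangle}{|\Phi_t|^2}\, J\Phi_t + \frac{\langle \mathbf{e},J\Phi_s\rangle}{|\Phi_s|^2}\, J\Phi_s,
\]
using $|\Phi_t|^2=|\alpha'|^2(|\alpha|^2+|\omega|^2)$ and $|\Phi_s|^2=|\dot\omega|^2(|\alpha|^2+|\omega|^2)$ from Theorem \ref{thm_immersion}. Since $J$ is multiplication by $i$ on $\mathbb{C}^2$ and $\Phi_t=\alpha'(-\overline{\alpha},\omega)$, $\Phi_s=\dot\omega(\overline{\omega},\alpha)$, the Hermitian products $(\mathbf{e},J\Phi_t)$ and $(\mathbf{e},J\Phi_s)$ involve only the first component of $\mathbf{e}$ and produce
\[
(\mathbf{e},J\Phi_t)=i\rho\, e^{i\theta}\,\overline{\alpha'}\,\alpha,\qquad (\mathbf{e},J\Phi_s)=-i\rho\, e^{i\theta}\,\overline{\dot\omega}\,\omega.
\]
Taking real parts and rewriting via $\operatorname{Re}(iz)=-\operatorname{Im}(z)$ and $\operatorname{Im}(\overline w)=-\operatorname{Im}(w)$ yields
\[
\langle \mathbf{e},J\Phi_t\rangle=\rho\,\operatorname{Im}\!\bigl(e^{-i\theta}\alpha'\overline{\alpha}\bigr),\qquad
\langle \mathbf{e},J\Phi_s\rangle=-\rho\,\operatorname{Im}\!\bigl(e^{-i\theta}\dot\omega\,\overline{\omega}\bigr).
\]

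Imposing $H=\mathbf{e}^\perp$ and using the orthogonal decomposition given by \eqref{MeanCurvatureVector}, the coefficients of $J\Phi_t$ and $J\Phi_s$ must coincide. Clearing the common factor $|\alpha|^2+|\omega|^2$ and one power of $|\alpha'|$ or $|\dot\omega|$ gives exactly the two scalar equations \eqref{curvaturas_translating}. Conversely, if both equations hold, the same coefficient comparison shows $H-\mathbf{e}^\perp=0$, so the equivalence is immediate.

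The calculation is essentially routine; the only subtlety to watch carefully is the bookkeeping of $i$'s and complex conjugations when passing from the Hermitian product $(\cdot,\cdot)$ to its real part, and the sign flip in the $\omega$-equation (which traces back to the $\omega$ component of $\Phi_s$ being $\alpha\dot\omega$ rather than $-\overline{\alpha}\alpha'$ as in $\Phi_t$). The structure mirrors that of the proof of Corollary \ref{cor_self-similar}, but is considerably simpler because $\mathbf{e}$ is constant, so no derivative of an integrated primitive enters.
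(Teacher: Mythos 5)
Your proposal is correct and follows essentially the same route as the paper: both decompose $\mathbf{e}^\perp$ in the normal frame $\{J\Phi_t,J\Phi_s\}$ (the paper writes the coefficients as $\im\left(\mathbf{e},\Phi_t\right)/|\Phi_t|^2$, which equals your $\langle\mathbf{e},J\Phi_t\rangle/|\Phi_t|^2$), obtain exactly the same two scalar coefficients $\rho\,\im\left(e^{-i\theta}\alpha'\overline\alpha\right)$ and $-\rho\,\im\left(e^{-i\theta}\dot\omega\overline\omega\right)$, and compare with \eqref{MeanCurvatureVector}. Your sign bookkeeping checks out, so nothing further is needed.
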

\begin{remark}
{\rm In Corollary \ref{cor_translating} we recover, up to dilations and isometries, the Lagrangian translating solitons
described in Proposition 3.3 of \cite{CL12}. The corresponding curves $\alpha$ and $\omega$ are special non trivial
solution of the curve shortening problem including spirals and self-shrinking and self-expanding planar curves (see
\cite{CL12} and references therein). }
\end{remark}
\begin{proof}
We first compute $\left(\rho\,e^{i\theta},0\right)^\perp$ for $\Phi=\alpha\ast\omega$:
\begin{align*}
\left(\rho\,e^{i\theta},0\right)^\perp&=\frac{\rho}{|\alpha|^2+|\omega|^2}\left(\frac{\im\left(\left(e^{i\theta},0\right),\Phi_t\right)}{|\alpha'|^2}\,J\Phi_t+\frac{\im\left(\left(e^{i\theta},0\right),\Phi_s\right)}{|\dot\omega|^2}\,J\Phi_s\right)\\
&=\frac{\rho}{|\alpha|^2+|\omega|^2}\left(\frac{\im\left(e^{-i\theta}\alpha'\overline\alpha\right)}{|\alpha'|^2}J\Phi_t-\frac{\im\left(e^{-i\theta}\dot\omega\overline\omega\right)}{|\dot\omega|^2}J\Phi_s\right).
\end{align*}
Then, taking into account \eqref{MeanCurvatureVector}, we finish the proof.
\end{proof}

\vspace{0.3cm}

%\subsection{Biharmonic Lagrangians}

\subsection{Willmore Lagrangians}
Consider the Willmore functional
\begin{align*}
\mathcal{W}=\int_\Sigma|H|^2d\mu
\end{align*}
for a closed surface $\Sigma$ immersed  in Euclidean space. The critical points of $\mathcal{W}$ are known as Willmore surfaces. Examples of Lagrangian Willmore surfaces can be found in \cite{CU3}.

We take $\alpha$ and $\omega$ closed unit speed
planar curves. Using Theorem  \ref{thm_immersion}, the Willmore functional of the Lagrangian conformal immersion $\Phi=\alpha\ast\omega$ is given by
\begin{align}\label{willmore_phi}
\mathcal{W}_\Phi=\int_{I_1\times I_2}\left(\kappa_\alpha^2+\kappa_\omega^2\right)dtds=L(\omega)\int_{I_1}\kappa_\alpha^2dt+L(\alpha)\int_{I_2}\kappa_\omega^2ds,
\end{align}
where $L(\alpha)$ and $L(\omega)$ denote the lengths of $\alpha$ and $\omega$, respectively.

\begin{corollary}\label{cor_willmore}
The Lagrangian immersion $\Phi=\alpha \ast \omega $, given in Theorem \ref{thm_immersion}, is a critical point of the
Willmore functional $\mathcal{W}_\Phi$ (with fixed lengths $L(\alpha)$ and $L(\omega)$) if and only if the curves
$\alpha$ and $\omega$ are free elastic curves parametrized by the arc length.
\end{corollary}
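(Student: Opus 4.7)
The plan is to exploit the decoupled form (\ref{willmore_phi}) of the Willmore functional: writing it as
\[
\mathcal{W}_\Phi = L(\omega)\int_{I_1}\kappa_\alpha^2\,dt + L(\alpha)\int_{I_2}\kappa_\omega^2\,ds,
\]
the lengths $L(\alpha)$ and $L(\omega)$ are constants by assumption, and each of the two remaining integrals depends on only one of the two generating curves. So the variational problem for $\mathcal{W}_\Phi$ (restricted to the class of Lagrangian immersions of the form $\alpha \ast \omega$ with both generators arc-length parametrized and of prescribed length) splits cleanly.

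First I would make the variation rigorous: given smooth one-parameter families of closed planar unit-speed curves $\alpha_\varepsilon$ and $\omega_\varepsilon$ with $L(\alpha_\varepsilon)=L(\alpha)$ and $L(\omega_\varepsilon)=L(\omega)$ for all $\varepsilon$, the derivative of $\mathcal{W}_{\Phi_\varepsilon}$ at $\varepsilon=0$ is
\[
\frac{d}{d\varepsilon}\bigg|_{\varepsilon=0}\mathcal{W}_{\Phi_\varepsilon}
= L(\omega)\,\frac{d}{d\varepsilon}\bigg|_{\varepsilon=0}\!\!\int_{I_1}\!\kappa_{\alpha_\varepsilon}^2\,dt
+ L(\alpha)\,\frac{d}{d\varepsilon}\bigg|_{\varepsilon=0}\!\!\int_{I_2}\!\kappa_{\omega_\varepsilon}^2\,ds,
\]
because $L(\alpha)$ and $L(\omega)$ stay constant under the admissible variations. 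Considering variations of $\alpha$ alone (i.e.\ with $\omega_\varepsilon \equiv \omega$) kills the second summand and forces $\alpha$ to be a critical point of $\int \kappa^2$ under fixed length; symmetrically varying $\omega$ alone forces $\omega$ to be a critical point under fixed length. This is exactly the definition of a free elastic curve (critical for the bending energy with the length constraint, yielding the Euler-Lagrange equation $2\ddot\kappa + \kappa^3 - \lambda\kappa = 0$ for some Lagrange multiplier).

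For the converse direction, if both $\alpha$ and $\omega$ are free elastic curves, each of the two summands above vanishes to first order under any length-preserving variation of the respective curve; hence for an arbitrary admissible variation of the pair $(\alpha,\omega)$ the total derivative of $\mathcal{W}_\Phi$ vanishes, so $\Phi = \alpha\ast\omega$ is critical. The main (and essentially the only) nontrivial point is ensuring that the decoupling observed in (\ref{willmore_phi}) really does reduce a joint constrained variational problem for $\Phi$ to two independent constrained variational problems for $\alpha$ and $\omega$; once one fixes the class of variations (those coming from length-preserving deformations of the two generating curves), the argument is just the bilinearity of the functional in the two pieces together with the classical definition of free elastica.
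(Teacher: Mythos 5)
Your argument is essentially identical to the paper's proof: both exploit the decoupling of $\mathcal{W}_\Phi$ in \eqref{willmore_phi} into $L(\omega)\int_{I_1}\kappa_\alpha^2\,dt+L(\alpha)\int_{I_2}\kappa_\omega^2\,ds$, vary each generating curve independently under the fixed-length constraint, and invoke the Langer--Singer characterization of free elastic curves as the critical points of the bending energy. You simply spell out the first-variation computation in more detail than the paper does, which is fine.
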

\begin{proof}
From \eqref{willmore_phi}, the critical points of the Willmore functional $\mathcal{W}_\Phi$ (with fixed
$L_1=L(\alpha)$ and $L_2=L(\omega)$) are given by Lagrangian conformal immersions constructed with unit speed planar
curves that are critical points of the functionals $\int_0^{L_1}\kappa_\alpha^2dt$ and $\int_0^{L_2}\kappa_\omega^2ds$,
respectively. Since these are precisely free elastic curves according to \cite{LS} we finish the proof.
\end{proof}

\vspace{0.3cm}

\subsection{Lagrangian tori}
We now ask about the possibility of obtaining compact Lagrangians from our construction of Theorem \ref{thm_immersion}.
The following result gives a sufficient condition on the generatrix closed curves to produce Lagrangian tori.
\begin{proposition}\label{cor_compact}
Let $\alpha=\alpha(t)\subset\c\setminus\{0\}$, $t\in\R$, and $\omega=\omega(s)\subset\c\setminus\{0\}$, $s\in\R$, be
regular periodic planar curves, with periods $T$ and  $S$ respectively, such that
\begin{align}\label{condi_curves-compact}
\int_0^{T}\langle\alpha',J\alpha\rangle=0=\int_0^{S}\langle\dot\omega,J\omega\rangle.
\end{align}
Then the Lagrangian immersion $\Phi=\alpha \ast \omega $, given in Theorem \ref{thm_immersion}, is doubly periodic;
concretely, $\Phi(t+T,s)=\Phi(t,s)=\Phi(t,s+S)$, $\forall(t,s)\in\R^2$.
\end{proposition}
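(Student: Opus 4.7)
The plan is to use the rewritten form of $\Phi$ displayed in the first Remark after Theorem \ref{thm_immersion}, which rests on the elementary identity
\[
z'(t)\,\overline{z(t)} \;=\; \tfrac12\tfrac{d}{dt}|z(t)|^2 \;+\; i\,\langle z'(t),\,J z(t)\rangle
\]
valid for any smooth planar curve $z = z(t) \subset \c$. Applying this to $\alpha$ and $\omega$ splits the first component of $\Phi$ into two pieces of very different nature: an \emph{algebraic} piece depending only on $|\alpha|^2$ and $|\omega|^2$, and an \emph{integral} piece depending on $\langle \alpha', J\alpha\rangle$ and $\langle \dot\omega, J\omega\rangle$. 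That decomposition is essentially the whole engine of the proof.

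First, I would note that the second component $\alpha(t)\omega(s)$ is trivially doubly periodic with periods $T$ and $S$ from the hypotheses on $\alpha$ and $\omega$. For the first component, substituting the identity into the formula of Theorem \ref{thm_immersion} gives, up to a fixed constant depending on $t_0,s_0$,
\[
\Phi_1(t,s) \;=\; \frac{|\omega(s)|^2-|\alpha(t)|^2}{2} \;+\; i\!\left(\int_{s_0}^s\langle\dot\omega,J\omega\rangle(y)\,dy \;-\; \int_{t_0}^t\langle\alpha',J\alpha\rangle(x)\,dx\right).
\]
The real part is manifestly doubly periodic because $|\alpha|^2$ and $|\omega|^2$ inherit the periods $T$ and $S$ respectively. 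All the work therefore reduces to the imaginary part, and this is exactly where the hypothesis \eqref{condi_curves-compact} will enter.

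For the imaginary part, I would use that $T$-periodicity of $\alpha$ implies $T$-periodicity of the integrand $\langle\alpha',J\alpha\rangle$, whence
\[
\int_{t_0}^{t+T}\langle\alpha',J\alpha\rangle(x)\,dx \;-\; \int_{t_0}^{t}\langle\alpha',J\alpha\rangle(x)\,dx \;=\; \int_{t}^{t+T}\langle\alpha',J\alpha\rangle(x)\,dx \;=\; \int_{0}^{T}\langle\alpha',J\alpha\rangle(x)\,dx \;=\; 0
\]
for every $t$, by the first half of \eqref{condi_curves-compact}. So the $t$-integral is $T$-periodic in $t$; an identical argument, using the $S$-periodicity of $\omega$ and the second half of \eqref{condi_curves-compact}, gives that the $s$-integral is $S$-periodic in $s$. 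Combining with the observations of the previous paragraph yields $\Phi(t+T,s) = \Phi(t,s) = \Phi(t,s+S)$, as required.

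There is no serious obstacle: the hypothesis \eqref{condi_curves-compact} has been tailored exactly to kill the only non-algebraic dependence of $\Phi_1$ on $t$ and on $s$. It may be worth noting in the write-up that the two integral conditions have a transparent geometric meaning, namely that both closed generatrix curves enclose \emph{zero oriented area} (since $\int_0^T\langle\alpha',J\alpha\rangle\,dx$ equals twice the signed area bounded by $\alpha$, and similarly for $\omega$); this makes the sufficient condition both conceptual and easy to check in examples.
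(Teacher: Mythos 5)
Your proposal is correct and follows essentially the same route as the paper: both rely on the rewritten form of $\Phi$ from the remark after Theorem \ref{thm_immersion}, observe that the algebraic terms $|\alpha|^2$, $|\omega|^2$ and the factor $\alpha(t)\omega(s)$ are automatically periodic, and reduce the double periodicity to the vanishing of the period integrals in \eqref{condi_curves-compact} via periodicity of the integrands. Your added remark on the zero-enclosed-area interpretation is a nice touch not present in the paper, but the argument itself is the same.
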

\begin{proof}
Under the hypothesis of this proposition, using Remark 2.1, it is clear that $\Phi(t+T,s)=\Phi(t,s)=\Phi(t,s+S)$ if and
only if
\[
\int_{0}^{t+T}\langle\alpha',J\alpha \rangle = \int_{0}^{t}\langle\alpha',J\alpha \rangle , \quad
\int_{0}^{s+S}\langle\dot \omega ,J\omega \rangle = \int_{0}^{s}\langle\dot \omega ,J\omega \rangle .
\]
But using again that $\alpha$ is $T$-periodic and $\omega$ is $S$-periodic, the above conditions are reduced to
(\ref{condi_curves-compact}).
\end{proof}
For example, we can consider a Gerono's lemniscata (see Figure \ref{gerono}) given by
\begin{align*}
\alpha(t)=\left(1 + 2 \cos t, 2 \cos t \sin t\right)
\end{align*}
and a Lissajous curve (see Figure \ref{lissajous}) given by
\begin{align*}
\omega(s)=\left(\sin s, \sin(2 s)\right).
\end{align*}
Then it is easy to check that $\Phi=\alpha\ast\omega$ can be written as
\begin{align*}
\Phi(t,s)=&\biggl(
\frac{1}{4}\bigl(8 \sin^4t-2\cos^2s - \cos(4 s) - 8 \cos t \bigr)\\
&+
 \frac{i}{6}\,\bigl(9 \cos s - \cos(3 s) - 2 \left(9 \sin t + 3 \sin(2 t) + \sin(3 t)\right)\bigr), \\
& \left(1 + \left(2 - 4 \cos s \sin t\right)\cos t\right)\sin s+i\bigl( \left(1 + 2 \cos t\right) \sin(2 s) +
  \sin s \sin(2 t)\bigr)\biggr).
\end{align*}

In Figure \ref{compact} we illustrate the projections of $\Phi=\alpha\ast\omega$ to the coordinate 3-spaces of $\R^4$.

\begin{figure}[h!]
\begin{center}
\includegraphics[height=3.9cm]{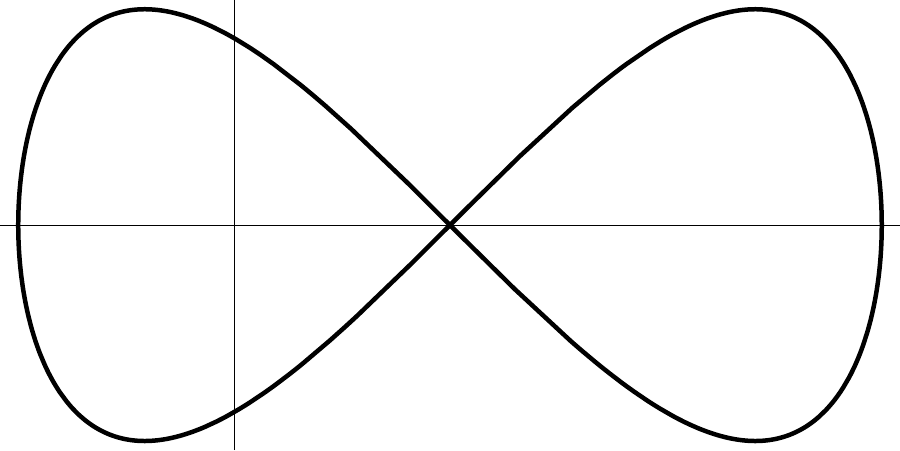}\caption{Gerono's lemniscata.}\label{gerono}
\end{center}
\end{figure}
\begin{figure}[h!]
\begin{center}
\includegraphics[height=6cm]{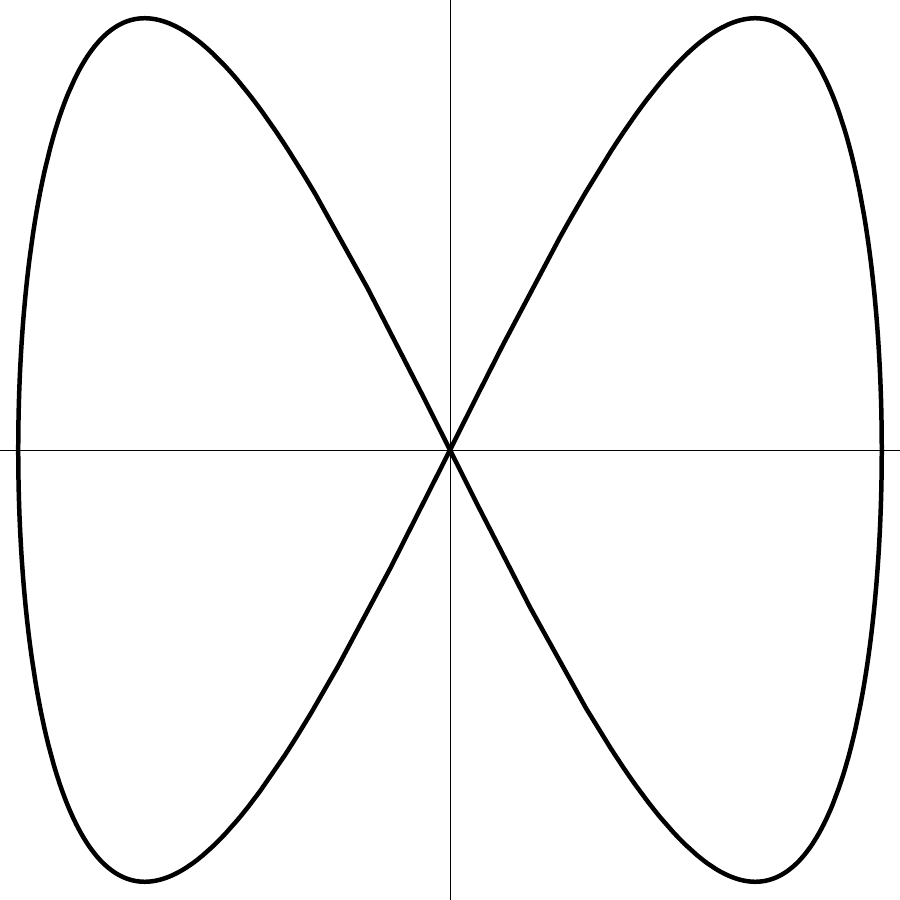}\caption{Lissajous curve.}\label{lissajous}
\end{center}
\end{figure}

\begin{center}
\begin{figure}[h!]
\begin{center}
\begin{tabular}{cc}
\includegraphics[height=5cm]{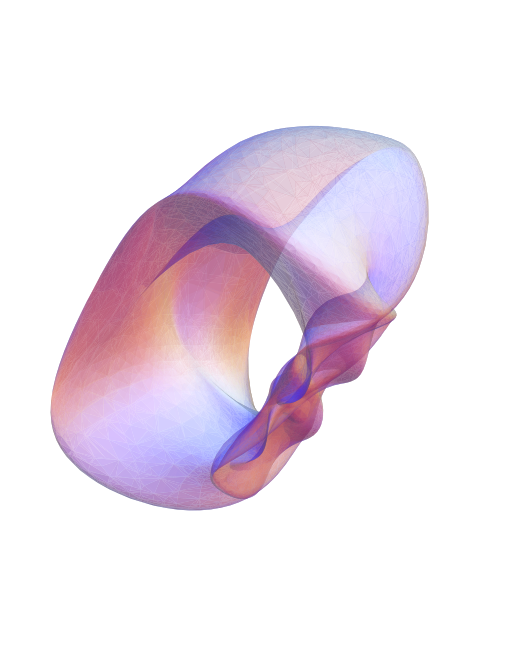} & \includegraphics[height=5cm]{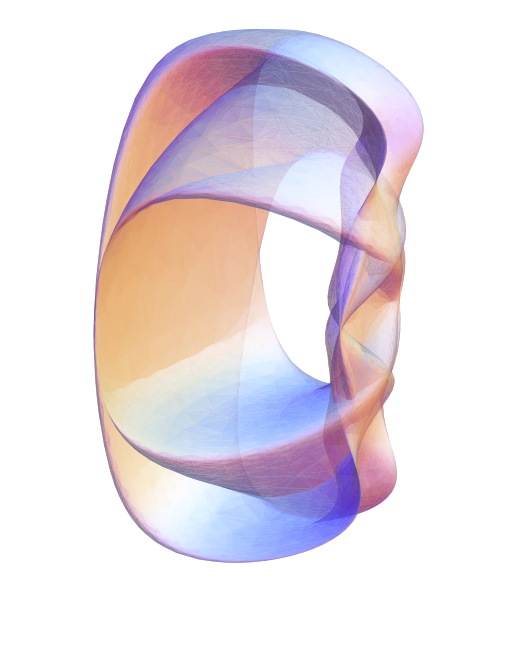}\\
\includegraphics[height=5.5cm]{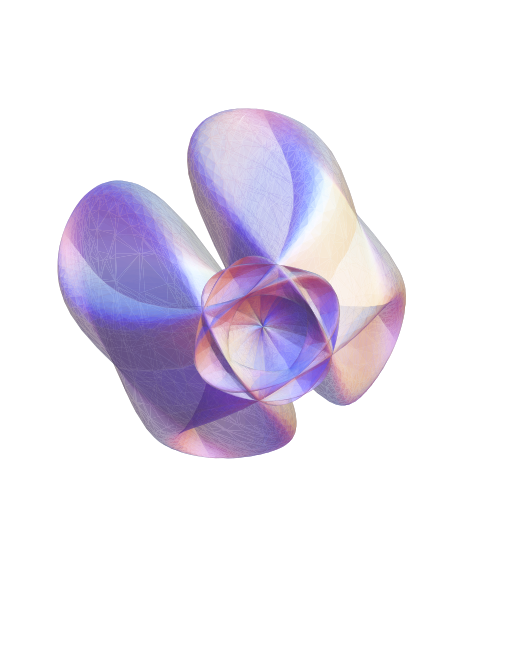} & \includegraphics[height=5cm]{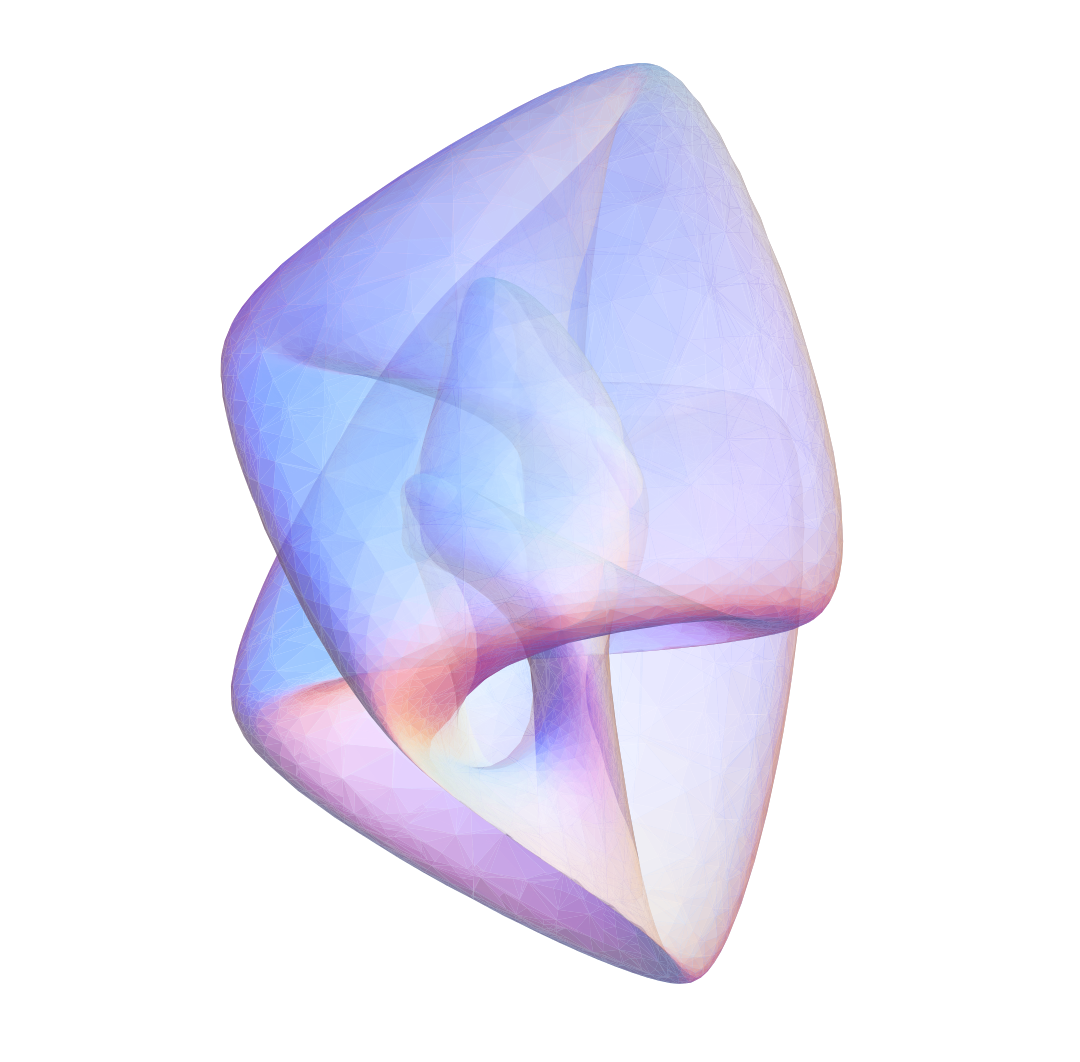}\\
\end{tabular}
\caption{Projections of the torus to the coordinate 3-spaces of $\C^2\equiv\R^4$.}\label{compact}
\end{center}
\end{figure}
\end{center}

\vspace{0.5cm}

%*****************************************************************************************************************


\begin{thebibliography}{1}
  \bibliographystyle{alpha}

\bibitem{A} H.~Anciaux.
\newblock {\em Construction of many Hamiltonian stationary Lagrangian surfaces in Euclidean four-space.}
\newblock  Calc.\ Var. {\bf 17} (2003), 105--120.

\bibitem{AC} H.~Anciaux and I.~Castro.
\newblock {\em Constructions of Hamiltonian-minimal Lagrangian submanifolds in complex Euclidean space.}
\newblock  Results Math.\ {\bf 60} (2011), 325–-349

\bibitem{BF} P.F.~Byrd and M.D.~Friedman.
\newblock {\em Handbook of Elliptic Integrals for Engineers and Scientists.}
\newblock Springer-Verlag, New-York (1971).


\bibitem{CCh} I.~Castro and B.-Y.~Chen.
\newblock {\em Lagrangian surfaces in complex Euclidean plane via spherical and hyperbolic curves.}
\newblock  Tohoku Math.\ J.\ {\bf 58} (2006), 565--579.

\bibitem{CL10} I.~Castro and A.M.~Lerma.
\newblock {\em Hamiltonian stationary self-similar solutions for Lagrangian mean
curvature flow in complex Euclidean plane.}
\newblock Proc.\ Amer.\ Math.\ Soc.\ {\bf 138} (2010), 1821--1832.

\bibitem{CL12} I.~Castro and A.M.~Lerma.
\newblock {\em Translating solitons for Lagrangian mean curvature flow in complex Euclidean plane .}
\newblock Internat.\ J.\ Math.\  {\bf 23} (2012), 1250101 (16 pages).

\bibitem{CM} I.~Castro and C. R. Montealegre.
\newblock {\em A family of surfaces with constant curvature in Euclidean four-space.}
\newblock Soochow J. Math. {\bf 30} (2004), no. 3, 293--301.

\bibitem{CU2}
I.~Castro and F.~Urbano.
\newblock {\em Examples of unstable Hamiltonian-minimal Lagrangian tori in $\c^2$.}
\newblock Compositio Math.\ {\bf 111} (1998), 1--14.

\bibitem{CU3}
I.~Castro and F.~Urbano.
\newblock {\em Willmore surfaces of $\r^4$ and the Whitney sphere.}
\newblock Ann.\ Global Anal.\ Geom.\ {\bf 19} (2001), 153--175.

\bibitem{CU4}
I.~Castro and F.~Urbano.
\newblock {\em A characterization of the Lagrangian pseudosphere.}
\newblock Proc.\ Amer.\ Math.\ Soc.\ {\bf 132} (2004), 1797--1804.

\bibitem{HL}
R.~Harvey and H.B.~Lawson.
\newblock {\em Calibrated geometries.}
\newblock Acta Math.\ {\bf 148} (1982), 47--157.

\bibitem{HR1}
F.~Helein and P.~Romon.
\newblock {\em Hamiltonian stationary Lagrangian surfaces in $\c^2$.}
\newblock Comm.\ Anal.\ Geom.\ {\bf 10} (2002), 79--126.

\bibitem{HO}
D.A.~Hoffman and R.~Osserman.
\newblock {\em The geometry of the generalized Gauss map.}
\newblock Mem.\ Amer.\ Math.\ Soc.\  {\bf 236}, 1980.

\bibitem{LS}
J.~Langer and D. A.~Singer.
\newblock {\em The total squared curvature of closed curves.}
\newblock J. Differential Geom. {\bf 20} (1984), no. 1, 1--22.

\bibitem{O}
Y.G.~Oh.
\newblock {\em Volume minimization of Lagrangian submanifolds under Hamiltonian deformations.}
\newblock Math.\ Z.\ {\bf 212} (1993), 175--192.

\bibitem{RU}
A.~Ros and F.~Urbano.
\newblock {\em Lagrangian submanifolds of $\C^n$ with conformal Maslov form and the Witney sphere.}
\newblock J. Math. Soc. Japan\ {\bf 508} (1998), no. 1, 203--226.

\bibitem{SW}
R.~Schoen and J.G.~Wolfson.
\newblock {\em Minimizing area among Lagrangian surfaces: the mapping problem.}
\newblock J.\ Differential Geom.\ {\bf 57} (2001), 301--388.

\bibitem{SYZ}
A.~Strominger, S.T.~Yau and E.~Zaslow.
\newblock {\em Mirror symmetry is T-duality.}
\newblock Nuclear Phys.\ B479 (1996) 243--259.

\end{thebibliography}
\end{document}